\documentclass{compositio}

\usepackage{mathpazo}
\usepackage{lmodern}

\usepackage[inline]{enumitem}

\usepackage{amssymb,amsmath,latexsym,lipsum}
\usepackage{enumitem}
\usepackage{tabulary}
\usepackage{booktabs}

\usepackage{charter}
\usepackage[title]{appendix}
\usepackage{longtable}
\usepackage{amsthm}
\usepackage{euscript}
\usepackage{color}
\usepackage{float}
\usepackage[T1]{fontenc}
\usepackage[singlespacing]{setspace}
\usepackage[colorinlistoftodos]{todonotes}
\usepackage{tikz}
\usetikzlibrary {positioning}
\definecolor {processblue}{cmyk}{0.96,0,0,0}
\usepackage{bookmark}
\usepackage{multirow,bigdelim}
\usepackage{mathrsfs}
\usepackage{natbib}
\usepackage{mathtools}
\usepackage{tikz-cd}
\usepackage{pgfplots}
\usepackage{pgf}
\usetikzlibrary{arrows}

\pgfplotsset{compat=1.16}
\renewcommand{\contentsname}{Contents\\{\footnotesize\normalfont}}

\newtheorem{theorem}{Theorem}[section]

\newtheorem{lemma}[theorem]{Lemma}
\newtheorem{corollary}[theorem]{Corollary}
\newtheorem{definition}[theorem]{Definition}
\theoremstyle{remark}
\newtheorem{example}[theorem]{Example}
\theoremstyle{remark}
\newtheorem{remark}[theorem]{Remark}

\newcommand{\Mgn}{\mathcal{M}_{g,n}}

\newcommand{\Cgn}{\mathcal{C}_{g,n}}

\newcommand{\Mgnexm}{\mathcal{M}_{g,n}^{ex, \mathbf{m}}}

\newcommand{\GMgnexm}{\Gamma\mathcal{M}_{g,n}^{ex, \mathbf{m}}}

\newcommand{\Gm}{\mathbb{G}_m}
\newcommand{\OC}{\mathcal{O}_C}
\newcommand{\Fp}{\mathbb{F}_p}
\newcommand{\N}{\mathbb{N}}

\begin{document}

\title{The Moduli Space of Twisted Exact Differential Forms on Curves in Positive Characteristic}

\author{Matthias Hippold}
\email{matthias.hippold@mail.huji.ac.il}
\address{Einstein Institute of Mathematics, The Hebrew University of Jerusalem, Edmond J.
Safra Campus, Giv’at Ram, Jerusalem, 91904, Israel }

\classification{14H10 (primary), 14H51 (secondary).}

\begin{abstract}
After fixing a pattern $\mathbf{m}$ of zeroes and poles, we introduce a Moduli stack $\GMgnexm$ over $\mathbb{F}_p$ that parametrizes smooth marked curves together with a non-zero differential form that is the differential of a meromorphic function. Furthermore, we consider the stack $\Mgnexm$ that parametrizes those divisors on smooth curves that appear as divisors of exact differential forms. By introducing a local-global principle for first order deformations of the objects that we consider, we show smoothness of these stacks and compute their dimension.

\end{abstract}
 
\maketitle
\newtheorem*{theorem*}{\normalfont\scshape Theorem}
\vspace*{6pt}\tableofcontents

\section{Introduction}
Asking which divisors on a smooth marked curve appear as divisors of a differential form is a classical question that has been studied so far in characteristic $0$ only. Moduli spaces of these divisors on curves appear as strata of Moduli spaces of abelian or meromorphic differentials and in the beginning were mainly studied from the perspective of Teichm\"uller dynamics \cite{masur1982interval}, \cite{veech1982gauss} where their dimension was computed. Later, Kontsevich and Zorich classified the components of these strata \cite{kontsevich2003connected} and Polishchuk showed their smoothness \cite{polishchuk2006moduli}. Most recently, mathematicians have started to consider compactifications of these Moduli spaces of curves with a differential form with a fixed pattern of zeroes and poles. Farkas and Pandharipande introduced a large compactification where the locus that parametrizes divisors on smooth curves that are divisors of differential forms is not a dense substack \cite{farkas2018moduli}. In \cite{bainbridge2018}, the authors described a modular interpretation of the closure of the locus of smooth curves, which means that they provided a minimal compactification. There are also logarithmic descriptions of these compactifications as in \cite{chen2025tale}. In some of these papers the differential form is part of the data of the Moduli functor, in other cases they describe only the divisor of the differential form. However, as the forgetful map exhibits the space with differential form as a $\mathbb{G}_m$-bundle over the space that just parametrizes divisors, the study of the local structure of these stacks is equivalent.

Many of the methods employed in the described papers rely heavily on complex analytic techniques and therefore do not generalize well to positive characteristic. For example, in the proof of his smoothness result, Polishchuk uses the fact that on first cohomology groups the de Rham differential induces the zero map. In characteristic $p$ however, if one restricts oneself to exact differential forms, so forms that are the differential of a meromorphic function, proving a smoothness result becomes possible. The study of such Moduli problems is further motivated by the use of a Moduli space of exact differential forms in the proof of a log smoothness result for a Hurwitz space for wildly ramified covers in \cite{hippold2026logarithmichurwitzspacesmixed} when the curve is of genus $0$. To generalize this log smoothness result further, a statement on smoothness of a Moduli space of exact differential forms on curves of general genus will be needed.

We now briefly describe our main theorem: After fixing a genus $g$ and an integer partition $\mathbf{m}=(m_1, \dots, m_n)$ of $2g-2$, such that $2g-2+n >0$ holds for stability reasons, we introduce the Moduli functor $\Mgnexm$ that parametrizes those marked curves of genus $g$, $(C, q_1, \dots, q_n)$ such that $\Omega_C(-\sum_{i=1}^n m_i q_i)$ has a non-zero exact section. By using a local-global principle stating that every choice of local first order deformations around the markings can be glued together to a global first order deformation, we prove the following theorem:
\begin{theorem*} 
    The Moduli functor $\Mgnexm$ is representable by a Deligne-Mumford stack. If this stack is nonempty, it is smooth of dimension
    \begin{align*}
        \dim(\Mgnexm)= g-2+n+\sum_{i=1}^n \left \lfloor \frac{m_i}{p} \right \rfloor
    \end{align*}
\end{theorem*}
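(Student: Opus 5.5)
The plan is to work first with the stack $\GMgnexm$ of tuples $(C,q_1,\dots,q_n,\omega)$ with $\omega$ a non-zero exact form and $\operatorname{div}(\omega)=\sum m_i q_i$. Then I would descend to $\Mgnexm$ via the forgetful map. This map is a $\Gm$-torsor, since scaling an exact form keeps it exact, and $\omega$ is determined up to a scalar by its divisor. So it suffices to show that $\GMgnexm$ is a smooth Deligne–Mumford-type stack of dimension $g-1+n+\sum_i\lfloor m_i/p\rfloor$.

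\emph{Representability.} Exactness of $\omega$ is the closed condition $\mathcal{C}(\omega)=0$ on the Cartier operator, which is $\mathcal{O}^p$-linear and makes sense in families. Vanishing along the prescribed divisor is also a closed condition, inside the total space of the relative Hodge bundle (twisted at the markings where $m_i<0$) over $\mathcal{M}_{g,n}$. So $\GMgnexm$ is a locally closed substack of a stack that is algebraic over $\mathcal{M}_{g,n}$. The quotient $\Mgnexm$ by $\Gm$ has finite unramified stabilizers, because automorphisms of a stable marked curve are finite. Hence it is Deligne–Mumford.

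\emph{Smoothness via local data.} Since the target has finite type and is locally of finite presentation, it is enough to prove formal smoothness, i.e.\ to lift along small extensions $A'\to A$ with kernel $k$, and to compute the tangent space. Near each $q_i$ I would choose a formal coordinate $t$ in which $\omega=d(f_i)$. Any exact form with a zero or pole of exact order $m_i$ can be normalized to a standard local shape, up to adding $p$-th powers to $f_i$ and changing coordinates.
\begin{itemize}
\item If $p\nmid m_i+1$, then $f_i=t^{m_i+1}$.
\item If $p\mid m_i+1$, then $f_i=t^{m_i+1}\cdot(\text{unit})$, and the form has infinitely many $p$-power-free normalizations.
\end{itemize}
The first computation is local. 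Consider the functor of deformations of the germ $(\hat{\mathcal{O}}_{C,q_i},\omega)$ modulo reparametrization fixing $q_i$, preserving exactness and the order $m_i$. I would compute it explicitly: deforming $f_i$ by $\epsilon\, g$ with $g\in k((t))/k((t^p))$, and quotienting by the action of vector fields $\epsilon\, v\,\partial_t$. I expect this local functor to be formally smooth with tangent space of dimension $\lfloor m_i/p\rfloor$. Heuristically, the surviving parameters are the coefficients of $t^{j}$ with $j\le m_i$ and $p\mid j$ that cannot be absorbed by coordinate changes.

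\emph{Local-global principle.} I would then show that every tuple of local deformations at $q_1,\dots,q_n$ is induced by a global first order deformation, and similarly for obstructions. Away from the markings $\omega$ is a nowhere-vanishing exact form, so $\omega=df$ locally with $df$ a unit. Locally $f$ is then an étale coordinate, and deformations are rigid up to coordinate change. Gluing these reduces to a Čech computation. The global deformations of $(C,\omega)$ with trivial local deformations should be controlled by the hypercohomology of a two-term complex
\[
T_C\!\left(-\textstyle\sum q_i\right)\xrightarrow{\ L_{(\cdot)}\omega\ } \mathcal{B}^1_C,
\]
where $\mathcal{B}^1_C=d\mathcal{O}_C$ is the sheaf of exact forms (twisted appropriately) and $L_v\omega=d(\iota_v\omega)$. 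The key claim is that the obstruction group, the $\mathbb{H}^2$ term, vanishes. This should follow because $\mathcal{B}^1_C\cong F_*\mathcal{O}_C/\mathcal{O}_C$ is well understood and the map is surjective on $H^1$. Once that holds, the global-to-local map is surjective and lifting is unobstructed. The dimension then equals $\chi$ of this complex plus the local contributions. Using Riemann–Roch for $T_C(-\sum q_i)$ and for $\mathcal{B}^1_C$ (whose Euler characteristic is $(p-1)(g-1)$ up to the twists), the terms should combine to $g-1+n+\sum_i\lfloor m_i/p\rfloor$. Dividing out the $\Gm$ then gives the formula in the statement.

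The main obstacle is the local-global step. Specifically, it is proving surjectivity of global first order deformations onto the product of local ones, equivalently the vanishing of the relevant $H^1$/$\mathbb{H}^2$ term. I would first attempt it using the exact sequence $0\to\mathcal{O}_C\to F_*\mathcal{O}_C\to\mathcal{B}^1_C\to0$. The idea is to show that the composite $H^1(T_C(-\sum q_i))\to H^1(\mathcal{B}^1_C)$ is surjective by duality with the Cartier operator. If that fails globally, the fallback is to compare directly with the exactness condition $\mathcal{C}(\omega)=0$ as a section of the bundle $F_*\Omega_C\to\Omega_C$ and check transversality.
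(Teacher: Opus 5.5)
There is a genuine gap. Your dimension count rests on local contributions of size $\lfloor m_i/p\rfloor$, and these do not exist.

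\textbf{The local functor is rigid.} Assume $p\nmid m_i+1$; otherwise the stack is empty by Remark~\ref{rem: divisibility}, so your second bullet is vacuous. Take an exact form over $A[[t]]$ with a zero or pole of exact order $m_i$ along the marking, and a coordinate $x$ centered there. It integrates to $F=x^{m_i+1}U$ with $U$ a unit. Setting $z=xU^{1/(m_i+1)}$ gives $\omega=d(z^{m_i+1})$. So every deformation is trivial and the local tangent space is $0$. Your heuristic also contradicts your own setup: modulo $k((t^p))$, the coefficients of $t^j$ with $p\mid j$ are zero.

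\textbf{The Euler characteristic is wrong.} In fact $\chi(\mathcal{B}^1_C)=\chi(F_*\mathcal{O}_C)-\chi(\mathcal{O}_{C^{(p)}})=0$; the number $(p-1)(g-1)$ is its degree. Twist correctly, taking the sheaf $\mathcal{E}$ of exact forms with $\operatorname{ord}_{q_i}\ge m_i$. Then $\chi(\mathcal{E})=-\sum_i(m_i-\lfloor m_i/p\rfloor)$, and $-\chi(T_C(-\sum q_i))+\chi(\mathcal{E})$ already equals $g-1+n+\sum_i\lfloor m_i/p\rfloor$. Adding your local terms therefore double counts. Using $(p-1)(g-1)$ instead gives a wrong number once $g\ge2$. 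So ``the terms should combine'' fails as stated. The vanishing of $\mathbb{H}^2$, which you call the main obstacle, is also only conjectured, via a duality with the Cartier operator that you never carry out.

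\textbf{The missing idea.} Local rigidity makes $v\mapsto L_v\omega=d(\iota_v\omega)$ a surjection of sheaves $T_C(-\sum q_i)\to\mathcal{E}$. Away from the markings this is exactly what the paper's cocycle $\eta_{ij}(du)=(G_j-G_i)/F_i'$ in Theorem~\ref{thm:ex_loc_glob_princ} realizes: it is the vector field with $\iota_v\omega=G_j-G_i$. Hence your complex is quasi-isomorphic to its kernel. Using $\lceil (m_i+1)/p\rceil=\lfloor m_i/p\rfloor+1$, that kernel is $\mathcal{L}=\{v:\iota_v\omega\in\mathcal{O}_C^p\}\cong\mathcal{O}_{C^{(p)}}(-\sum_i(\lfloor m_i/p\rfloor+1)q_i')$. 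Consequently:
\begin{enumerate*}[label=(\roman*)]
\item $\mathbb{H}^2=H^2(\mathcal{L})=0$ automatically on a curve, with no Cartier duality needed;
\item $H^0(\mathcal{L})\subset H^0(T_C(-\sum q_i))=0$ by stability;
\item $\dim\mathbb{H}^1=h^1(\mathcal{L})=g-1+n+\sum_i\lfloor m_i/p\rfloor$ by Riemann--Roch.
\end{enumerate*}
The $\lfloor m_i/p\rfloor$ enter through $\deg\mathcal{L}$, not through local moduli.

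With this repair your hypercohomology route becomes a valid alternative that gives unobstructedness directly. The paper argues differently. It adds the auxiliary pole $q_{aux}$ so that exact forms form the vector bundle $\ker(\pi_*\mathfrak{c})$. It then bounds $\dim\GMgnexm$ from below by counting $m_i-\lfloor m_i/p\rfloor$ linear conditions. Finally, it matches this bound with the tangent space via surjectivity of $\Phi$.
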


Again, one can also consider the $\mathbb{G}_m$-torsor over $\Mgnexm$ that also parametrizes the differential form. We call this stack $\GMgnexm$ and the analogous result for this Moduli problem is the following:
\begin{theorem*} 
    The Moduli functor $\GMgnexm$ is representable by a Deligne-Mumford stack. If this stack is nonempty, it is smooth of dimension
    \begin{align*}
        \dim(\GMgnexm)= g-1+n+\sum_{i=1}^n \left \lfloor \frac{m_i}{p} \right \rfloor
    \end{align*}
\end{theorem*}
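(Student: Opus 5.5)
The plan is to realize $\GMgnexm$ inside an explicit algebraic stack over $\mathcal{M}_{g,n}$, and then prove smoothness and compute the dimension by first-order deformation theory, using the local-global principle for first order deformations stated in the introduction.

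\emph{Representability.} Let $\pi:\Cgn\to\mathcal{M}_{g,n}$ be the universal curve with sections $q_i$. Set $D=\sum_i m_iq_i$. The pushforward $\pi_*\Omega_{\Cgn/\mathcal{M}_{g,n}}(-D)$ is coherent, and after stratifying by the rank of cohomology we obtain the abelian cone (relative $\mathrm{Spec}$ of $\mathrm{Sym}$ of the dual) parametrizing sections $\omega$. Inside it, non-vanishing of $\omega$ is an open condition. Exactness should be closed, via the Cartier operator. On the function field $k(C)$ in characteristic $p$, a meromorphic form is exact if and only if it is killed by the Cartier operator $\mathcal{C}$. Moreover $\mathcal{C}$ is $p^{-1}$-linear and sends $\Omega(-D)$ into a twist of $\Omega$. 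Hence the condition $\mathcal{C}(\omega)=0$ is the vanishing locus of a section of a coherent sheaf, after pulling back along Frobenius. This gives an algebraic stack of finite type. It is Deligne--Mumford because $\mathrm{Aut}(C,q_\bullet,\omega)\subset \mathrm{Aut}(C,q_\bullet)$ and the latter is finite and unramified when $2g-2+n>0$.

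\emph{Smoothness.} I would verify the infinitesimal lifting criterion, or equivalently show that obstructions vanish, through the local-global principle. Cover $C$ by the formal neighbourhoods of the $q_i$ and the complement $U$. On $U$ we have $\omega=df$ with $df$ nowhere vanishing and without poles, so $f$ is étale over $\mathbb{A}^1$ locally. Consequently every deformation of $(C|_U,\omega|_U)$ is locally trivial and the data there are rigid. Near $q_i$ we write $\omega=d(h)$ with $h\in k((t))$ and $\omega$ of order exactly $m_i$. Then I must show that the functor of deformations of such a germ, with the order preserved, is formally smooth. I plan to do this by an explicit normal form: write $h=\sum_j a_jt^j$ and lift the coefficients freely, subject only to the constraints forced by the vanishing of $d(t^{j})$ when $p\mid j$. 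The local-global principle then glues arbitrary local lifts to a global lift, so $\GMgnexm$ is smooth wherever it is nonempty.

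\emph{Dimension.} With smoothness established, it remains to compute the tangent space at a point, which is the main obstacle. I would present it as an extension
\[
0\to T_{\mathrm{glob}}\to T_{(C,q,\omega)}\GMgnexm\to \bigoplus_i T_{\mathrm{loc},i}\to 0 ,
\]
where surjectivity on the right is again the local-global principle. The term $T_{\mathrm{glob}}$ should be governed by $H^1(C,T_C(-\sum q_i))$ modulo directions fixed by $\omega$, plus the scaling of $\omega$. The local terms should be computed from the normal form. The key observation is that among the exponents $j\le m_i+1$ of a local primitive $h$, the monomials $t^{j}$ with $p\mid j$ give additional free parameters of the local deformation. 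These are the contributions $\lfloor m_i/p\rfloor$: they do not change $\omega$ to first order, but they vary the local primitive and hence the gluing datum. Combining Riemann--Roch on $C$ for $\Omega(-D)$ and $T_C$ with these local counts should yield $g-1+n+\sum_i\lfloor m_i/p\rfloor$. The same computation without the $\Gm$-scaling then recovers the dimension of $\Mgnexm$, which is one less.

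I expect the bookkeeping of the local contributions to be the delicate point. In particular one must check that the $p$-power exponents contribute exactly $\lfloor m_i/p\rfloor$ and do not interact with the global $H^1$ term, so I would first test the formula in genus $0$ against the known case.
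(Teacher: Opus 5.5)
There is a genuine gap in the smoothness step. You plan to verify the infinitesimal lifting criterion ``through the local-global principle'', but the principle you cite is, as you say yourself, one for \emph{first order} deformations (Theorem~\ref{thm:ex_loc_glob_princ}). It says that restriction from $T V_0$ to the local deformation spaces over $k[\epsilon]$ is surjective. That computes the Zariski tangent space. It says nothing about lifting along a general small extension $A'\to A$, and tangent spaces of constant dimension do not by themselves give smoothness.

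The paper closes exactly this gap with a lower bound on the dimension rather than an obstruction argument. The auxiliary pole $m_{aux}q_{aux}$ makes the Cartier operator surjective on global sections (Theorem~\ref{thm:car_sur} and Lemma~\ref{lem:car_sur}). So the exact forms form a vector bundle whose total space $V_0$ is smooth of known dimension. Then $\GMgnexm$ is cut out of $V_0$ by $\sum_{i\in I^+}(m_i-\lfloor m_i/p\rfloor)+(m_{aux}-\lceil m_{aux}/p\rceil)$ equations, giving $\dim\GMgnexm\geq g-1+n+\sum_i\lfloor m_i/p\rfloor$ (Theorem~\ref{thm: dim_bound}). The first order surjectivity then shows the tangent space has exactly this dimension, which forces smoothness. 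Your representability construction gives no such count. It is an abelian cone over $\pi_*\Omega(-D)$, whose rank jumps since $\Omega(-D)$ has degree $0$, cut down by the closed condition $\mathfrak{c}(\omega)=0$. So with only first order information you cannot conclude.

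If you prefer a genuinely deformation-theoretic proof, the missing ingredient is the sheaf $\mathcal{A}=\{\eta\in T_C(-\sum_iq_i):\omega(\eta)\in\OC^p\}$ of infinitesimal automorphisms of $(C,q_\bullet,\omega)$; recall $L_\eta\omega=d(\omega(\eta))$. Deformations over any Artin ring are locally trivial. An exact difference $dG$ with $G\in\OC\otimes I$ is absorbed by the vector field $\eta$ with $\omega(\eta)=G$. Near $q_i$ one first subtracts from $G$ its $p$-power monomials of degree at most $m_i$, so that $\eta\in T_C(-q_i)$. Hence obstructions lie in $H^2(C,\mathcal{A})\otimes I=0$, and the tangent space is $H^1(C,\mathcal{A})$. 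Via $\eta\mapsto\omega(\eta)$ and $\OC^p\cong\mathcal{O}_{C^{(p)}}$, and using $p\nmid m_i+1$, one finds $\mathcal{A}\cong\mathcal{O}_{C^{(p)}}(-\sum_i(\lfloor m_i/p\rfloor+1)q_i')$. Riemann--Roch together with $H^0(\mathcal{A})\subset H^0(T_C(-\sum_iq_i))=0$ then gives $\dim_k H^1(C,\mathcal{A})=g-1+n+\sum_i\lfloor m_i/p\rfloor$.

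This also locates the problems in your bookkeeping. The data on $U$ are locally trivial but not rigid, since $\mathcal{A}|_U\neq0$. The germs at the markings have no nontrivial deformations up to isomorphism. The $p$-power monomials you count do not deform $\omega$, so they cannot be the target of a surjection from the tangent space supplied by the local-global principle. The contribution $\lfloor m_i/p\rfloor$ enters only through the twist of $\mathcal{A}$ at $q_i$. In your proposal the final count is asserted rather than derived.
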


The author wants to express his gratitude to his academic advisor Michael Temkin whose suggestion to study Hurwitz spaces in positive characteristic with wild ramification motivated the author to develop the mathematical techniques of this paper. He furthermore wants to thank his Ph.D. brothers Yonatan Bachar and Simon Stojkovic for providing valuable feedback and many helpful discussions. 

This research was supported by ERC Consolidator Grant 770922 - BirNonArchGeom.
\subsection{Outline}
In Section~\ref{sec: cartier}, we recall basics about the Cartier operator that detects exact differential forms. We will use these insights in Section~\ref{sec:dimension} to show representability of the Moduli functors $\Mgnexm$ and $\GMgnexm$ and find a lower bound for the dimension of these stacks. In Section~\ref{sec:tangent_space}, we compute the tangent space of $\GMgnexm$ by introducing a local-global principle for its first order deformations and show smoothness of the stacks of our interest. Finally, in Section~\ref{sec:example}, we discuss the example of a Moduli space of non-zero exact differential forms on elliptic curves with neither zeroes nor poles. We will see that such a form exists iff the curve is supersingular. 

\section{The Cartier Operator}\label{sec: cartier}

In this section, we want to study the Cartier operator that is the main tool to study exactness of differential forms in positive characteristic. We want to start with the following observation that dates back to \cite{cartier1957nouvelle}:

We consider an algebraically closed field $k$ of characteristic $p$ and a smooth curve $C$ over $k$. Let $U$ be an open affine of $C$ and $\omega$ be a meromorphic differential form on $U$. For $t$ a local parameter at a closed point $x$, so a uniformizer of the local ring $\mathcal{O}_{C,x}$,  étale locally around $x$ the curve $C$ is isomorphic to $Spec(k[t])$ and one can write
\begin{align*}
    \omega= \sum_{i=0}^{p-1} f_i^p t^i dt
\end{align*}
for some rational functions $f_i$ in $t$. By expanding the $f_i$ into power series, we see that étale locally around $x$ one can integrate the form $\omega$ iff $f_{p-1}=0$. 

The correct version to think about exactness also for relative curves is the following:

\begin{definition}
    Let $\omega$ be a meromorphic relative differential form on a smooth relative curve $C$ over $S$. We call $\omega$ \textit{exact} iff there is an open cover $\{U_i\}_{i \in I}$ of $C$ and meromorphic functions $g_i$ on $U_i$ such that on each $U_i$ it holds that $\omega=dg_i$.
\end{definition}

\begin{remark}
    Over a large class of reasonable bases, such as $S$ being integral or $S$ being the spectrum of a local Artin algebra, the sheaf of meromorphic functions of $C$ and meromorphic relative differentials is constant. This means that in these cases, one does not have to consider open covers but can simply require $\omega$ to be the differential of a meromorphic function that is defined globally. 
\end{remark}

Cartier noted in \cite{cartier1957nouvelle} that when working over an algebraically closed field, our local observation comes from a larger picture, as there is an endomorphism between the sheaves of meromorphic differentials of $C$:
\begin{align*}
    K(C)\otimes_{\OC}\Omega_{C/k} \to K(C)\otimes_{\OC}\Omega_{C/k}
\end{align*}
where $K(C)$ is the function field of the curve that étale locally is given by
\begin{align*}
    \omega= \sum_{i=0}^{p-1} f_i^p t^i dt \mapsto f_{p-1}dt
\end{align*}
and a form is exact iff it is in the kernel of this operator.

If we work with families of curves over a general base, one might not have $p$-th roots needed for our formulation, so we need to modify our construction. For a smooth relative curve $C/S$ over a base in characteristic $p$, at first we want to recall the following diagram about different Frobenii:

\begin{center}
    \begin{tikzcd}
C
\arrow[drr, bend left, "F_{C}"]
\arrow[ddr, bend right]
\arrow[dr, "F_{C/S}"] & & \\
& C^{(p/S)} \arrow[r, "(F_S)_C"] \arrow[d ]
& C \arrow[d] \\
& S \arrow[r, "F_S"]
& S
\end{tikzcd}
\end{center}

Similarly to what we have defined for the case of the base being an algebraically closed field, in the general case we get the following operator that was studied in \cite{katz1970nilpotent}:
\begin{theorem}\label{thm:car_op}
    For $C/S$ a smooth relative curve with the base being of characteristic $p$ and $D=\sum_{i=1}^n m_i q_i$ a relative divisor on $C$, we define the divisor
    \begin{align*}
        D'=\sum_{i=1}^n \left \lceil \frac{m_i}{p}\right \rceil q_i'
    \end{align*}
    on $C^{(p/S)}$ with $q_i'$ the image of $q_i$ under the relative Frobenius $F_{C/S}$.
    There is a surjective linear operator
    \begin{align*}
        \mathfrak{c}: (F_{C/S})_* \Omega_{C/S}(D) \to \Omega_{C^{(p/S)}/S}(D')
    \end{align*}
    of $\mathcal{O}_{C^{(p/S)}}$-modules on $C^{(p/S)}$ that is defined étale locally by
    \begin{align*}
        \sum_{i=-N}^\infty a_i t^i dt \mapsto \sum_{k=-\left\lfloor \frac{N-1}{p}\right\rfloor }^\infty a_{kp-1}^{(p)} (t')^{k-1} dt'
    \end{align*}
    where $a_i^{(p)}$ is the image of $a_i$ under the absolute Frobenius map of $S$ and $t'$ is the local coordinate on $C^{(p/S)}$ corresponding to $t$. We will refer to this operator as the Cartier operator. If $S$ is the spectrum of an algebraically closed field $k$, then postcomposing with the isomorphism $C^{(p/k)} \cong C$ identifies the Cartier operator with the operator from \cite{cartier1957nouvelle} discussed earlier.
\end{theorem}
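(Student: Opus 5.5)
The plan is to construct $\mathfrak{c}$ first for $D=0$ as the classical relative Cartier operator of Katz \cite{katz1970nilpotent}, then extend it to allow poles along $D$ by a local computation, and finally check surjectivity and compatibility with Cartier's operator over $k$.

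For $D=0$ I take Katz's inverse Cartier isomorphism
\[
C^{-1}\colon \Omega^1_{C^{(p/S)}/S} \xrightarrow{\ \sim\ } \mathcal{H}^1\bigl((F_{C/S})_*\Omega^\bullet_{C/S}\bigr)
\]
and compose its inverse with the projection
\[
(F_{C/S})_*\Omega^1_{C/S} \to \mathcal{H}^1 = (F_{C/S})_*\Omega^1_{C/S}\big/ d\bigl((F_{C/S})_*\mathcal{O}_C\bigr).
\]
This composite is $\mathcal{O}_{C^{(p/S)}}$-linear, since $d$ is linear over $F_{C/S}^{-1}\mathcal{O}_{C^{(p/S)}}$. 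It is surjective because both maps in the composite are. Étale locally $C^{-1}$ sends $a'\,t'^{k-1}dt'$ to the class of $a\,t^{kp-1}\,t^{p-1}dt$. In the monomial basis $t^i dt$, the terms with $i\not\equiv -1 \pmod p$ are exact, while $t^{kp-1}dt$ maps to $t'^{k-1}dt'$. Together with the identification $C^{(p/S)}=C\times_{S,F_S}S$, which introduces the twist $a_i\mapsto a_i^{(p)}$, this gives the stated formula on power series, up to keeping track of which side carries the Frobenius twist of the coefficients.

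For general $D$, I work étale locally near $q_i$ with coordinate $t$ such that $q_i=\{t=0\}$. Then $q_i'=\{t'=0\}$ with $t'=t^p$, and sections of $\Omega_{C/S}(D)$ are $\sum_{i\ge -m}a_it^idt$ with $m=m_i$. Off $D$ the map is already defined. Near $q_i$ I extend it by the same formula, which makes sense on $K(C)\otimes\Omega$, after localizing or $t$-adically completing since the operator commutes with multiplication by $t'^{\pm1}$. Only terms with $kp-1\ge -m$ survive, so $k\ge \lceil (1-m)/p\rceil$, and the pole order of the image is at most $-(k-1)\le \lceil (m-1)/p\rceil+1-1$.

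The main obstacle is matching this with the claimed pole order $\lceil m/p\rceil$. One has $\lceil (m-1)/p\rceil\le\lceil m/p\rceil$, so the image does lie in $\Omega(D')$. Gluing is automatic because the operator is canonical, being Katz's map tensored with $\mathcal{O}(*D)$, so $\mathfrak{c}$ is well defined as a subsheaf map.

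Surjectivity onto $\Omega(D')$ is the delicate point, since the natural image only reaches pole order $\lceil (m-1)/p\rceil$. This differs from $\lceil m/p\rceil$ exactly when $p\mid m-1$ fails to help, i.e. when $m\equiv 1 \pmod p$. In that case I would re-examine the indexing. The summation bound $-\lfloor (N-1)/p\rfloor$ in the statement equals $-\lceil N/p\rceil+1$ for $N\ge1$, so the intended pole order of the image is $\lceil N/p\rceil$ in the variable $k-1$. I would reconcile this by rechecking the convention for $D'$, since $D$ with coefficient $m$ corresponds to pole order $N=m$. Locally, the monomial $t^{kp-1}dt$ with $kp-1\ge -N$ hits $t'^{k-1}dt'$, and every allowed target monomial $t'^{j}dt'$ with $j\ge -\lceil N/p\rceil$ must be hit. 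This is where I expect the edge case, and I would settle it by the explicit monomial check, adjusting the bookkeeping of $D'$ as needed.

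Finally, over $S=\operatorname{Spec}k$, the isomorphism $C^{(p/k)}\cong C$ given by the inverse of $(F_k)_C$ untwists the coefficients $a^{(p)}\mapsto a$. Writing $\omega=\sum_i f_i^p t^i dt$, the operator sends $\omega\mapsto f_{p-1}dt$, which recovers Cartier's operator from \cite{cartier1957nouvelle}.
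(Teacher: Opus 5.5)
Your overall route is the same as the paper's. The paper cites Katz (and Achinger et al.) for the untwisted operator and says the twisted version follows ``by twisting the operator and keeping track of the order of vanishing locally.'' That is exactly your plan of extending Katz's map $\mathcal{O}_{C^{(p/S)}}$-linearly to forms with poles along $D$ and counting monomials. The problem is that the local count, the only part not covered by the citations, contains an error, and as a result you leave surjectivity onto $\Omega_{C^{(p/S)}/S}(D')$ unproved.

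You correctly find that the surviving terms are those with $k\ge\lceil (1-m)/p\rceil$, so the image has pole order at most $1-\lceil (1-m)/p\rceil$. You then rewrite this as $\lceil (m-1)/p\rceil$, which is wrong. Since $-\lceil x\rceil=\lfloor -x\rfloor$, the correct value is $1+\lfloor (m-1)/p\rfloor$, and $1+\lfloor (m-1)/p\rfloor=\lceil m/p\rceil$ for every integer $m$. For example, with $p=3$ and $m=1$ the form $t^{-1}dt$ maps to $(t')^{-1}dt'$, which has a pole of order $1=\lceil 1/3\rceil$; your bound predicts no pole at all. From the wrong bound you infer that the image may miss the top pole order when $m\equiv 1\pmod p$. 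You then propose to ``adjust the bookkeeping of $D'$ as needed'', i.e.\ possibly to change the statement. So surjectivity is not established, and it is what the paper relies on later (surjectivity of $H^0(\mathfrak{c})$ and the rank of $\ker(\pi_*\mathfrak{c})$).

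The fix is short. Near $q_i$, put $m=m_i$ and $j_0=-\lceil m/p\rceil$.
\begin{itemize}
\item The form $(t')^{j_0}dt'$ locally generates $\Omega_{C^{(p/S)}/S}(D')$ as an $\mathcal{O}_{C^{(p/S)}}$-module.
\item It is the image of $t^{(j_0+1)p-1}dt$.
\item That form is a section of $\Omega_{C/S}(D)$, because $(j_0+1)p-1=p-1-p\lceil m/p\rceil\ge -m$, using $p\lceil m/p\rceil\le m+p-1$.
\end{itemize}
By linearity $\mathfrak{c}$ is surjective near $D'$. Away from $D'$ it is Katz's map, which you already showed is surjective.

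The identity $1+\lfloor (m-1)/p\rfloor=\lceil m/p\rceil$ also gives directly that the image lies in $\Omega_{C^{(p/S)}/S}(D')$. It shows as well that your own correct observation $-\lfloor (N-1)/p\rfloor=1-\lceil N/p\rceil$ agrees with the definition of $D'$, so there is no edge case.

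A smaller slip of the same kind: $C^{-1}\bigl((t')^{k-1}dt'\bigr)$ is the class of $t^{(k-1)p}\,t^{p-1}dt=t^{kp-1}dt$, not of $t^{kp-1}\,t^{p-1}dt$.
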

\begin{proof}
    The author has formulated a similar result in \cite[Theorem 2.26]{hippold2026logarithmichurwitzspacesmixed}. For the case of $S$ being an algebraically closed field, this is known from \cite{cartier1957nouvelle}, \cite{cartier1958questions} and \cite{seshadri1958operation}. The case for a general base and regular differentials is discussed in \citep[Section 7]{katz1970nilpotent} and from a more modern perspective, these results are also explained in \cite[Section 2.4.]{achinger2021global}. Our twisted version can be obtained from these results by twisting the operator and keeping track of the order of vanishing locally.
\end{proof}
While it is true that the Cartier operator is surjective, it is not always surjective on global sections. We want to understand when this is the case, at least in the case when the base is an algebraically closed field.

\begin{theorem}\label{thm:car_sur}
    Over an algebraically closed ground field $k$ of characteristic $p$, if 
    \begin{align*}
        deg(D) > 2g-2+\#Supp(D)
    \end{align*} 
    then the Cartier operator on global sections
    \begin{align*}
        H^0(\mathfrak{c}): H^0((F_{C/k})_*\Omega_{C}(D)) \to H^0(\Omega_{C^{(p/k)}}(D'))
    \end{align*}
    is surjective.
\end{theorem}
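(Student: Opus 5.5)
The plan is to put $\mathfrak{c}$ into a short exact sequence of sheaves on $C^{(p/k)}$ and reduce surjectivity on $H^0$ to the vanishing of an $H^1$. By Theorem~\ref{thm:car_op}, $\mathfrak{c}$ is surjective as a map of sheaves, so we have
\begin{align*}
0 \to \mathcal{K} \to (F_{C/k})_*\Omega_C(D) \xrightarrow{\ \mathfrak{c}\ } \Omega_{C^{(p/k)}}(D') \to 0,
\end{align*}
where $\mathcal{K}$ is the kernel, i.e.\ the sheaf of locally exact forms in $\Omega_C(D)$. The long exact cohomology sequence shows that $H^0(\mathfrak{c})$ is surjective as soon as $H^1(C^{(p/k)},\mathcal{K})=0$. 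So the whole proof comes down to describing $\mathcal{K}$ well enough to compute this $H^1$.

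Write $D=\sum_i m_i q_i$ and set $E=D-\sum_{q\in Supp(D)} q=\sum_i (m_i-1)q_i$. The first step is to show that the differential induces a surjection of sheaves of $\mathcal{O}_{C^{(p/k)}}$-modules
\begin{align*}
d: (F_{C/k})_*\mathcal{O}_C(E) \to \mathcal{K}.
\end{align*}
This map is $\mathcal{O}_{C^{(p/k)}}$-linear because $d(f^p g)=f^p\,dg$.
\begin{itemize}
\item \emph{The image lies in $\mathcal{K}$.} If $g$ has order at least $1-m_i$ at $q_i$, then $dg$ has order at least $-m_i$ there, so $dg$ is a section of $\Omega_C(D)$, and it is exact.
\item \emph{Surjectivity.} Take an exact local section $\omega=\sum a_j t^j\,dt$ of $\Omega_C(D)$ near $q_i$. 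Exactness forces $a_{kp-1}=0$ for all $k$, so $\omega=dg$ with $g=\sum_{p\nmid j} \frac{a_{j-1}}{j}t^j$. Every exponent appearing satisfies $j-1\ge -m_i$, so $g$ has order at least $1-m_i$. Here we simply omit all terms $t^{kp}$, which are killed by $d$. This works uniformly whether $m_i$ is positive, zero or negative. Away from $Supp(D)$ the same argument works with $m_i=0$.
\end{itemize}
The kernel of $d$ consists of the $p$-th powers inside $\mathcal{O}_C(E)$. This is a line bundle $\mathcal{L}=\mathcal{O}_{C^{(p/k)}}\bigl(\sum_i \lfloor (m_i-1)/p\rfloor q_i'\bigr)$ on $C^{(p/k)}$, and its exact value will not matter. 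Getting these local computations right, including at the zeroes of $D$ and in the relative-Frobenius normalization of $\mathfrak{c}$, is the only delicate point. I expect it to be the main (though modest) obstacle.

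We therefore obtain a second short exact sequence
\begin{align*}
0\to \mathcal{L}\to (F_{C/k})_*\mathcal{O}_C(E)\to \mathcal{K}\to 0.
\end{align*}
Since $C^{(p/k)}$ is a curve, $H^2(\mathcal{L})=0$, so $H^1(\mathcal{K})$ is a quotient of $H^1\bigl((F_{C/k})_*\mathcal{O}_C(E)\bigr)$. Because $F_{C/k}$ is finite, hence affine, this group equals $H^1(C,\mathcal{O}_C(E))$. By Serre duality it is dual to $H^0(\Omega_C(-E))$, which vanishes when $\deg E>2g-2$. Finally,
\begin{align*}
\deg E=\deg(D)-\#Supp(D),
\end{align*}
so the condition $\deg E>2g-2$ is exactly the hypothesis of the theorem. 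This gives $H^1(\mathcal{K})=0$, and hence $H^0(\mathfrak{c})$ is surjective.
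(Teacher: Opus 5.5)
Your proof is correct and follows the same route as the paper. Both arguments resolve the kernel of $\mathfrak{c}$ by $d$ from the pushforward of a line bundle on $C$, and then kill its $H^1$ with Serre duality under the degree hypothesis. The only difference is cosmetic: you twist uniformly by $E=D-\sum_{q\in Supp(D)}q$, whereas the paper uses $\tilde{D}$, which keeps $m_i$ when $p\mid m_i$ so that the kernel of $d$ is exactly $\mathcal{O}_{C^{(p/k)}}(D'')$. Both twists satisfy the needed degree bound, and your local check that $d$ is surjective fills in a step the paper only asserts.
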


\begin{proof}

We have seen that the Cartier operator is surjective as a morphism of sheaves, so there is a short exact sequence
\begin{center}
       \begin{tikzcd}
0 \arrow[r] & \mathcal{N} \arrow[r] & (F_{C/k})_*\Omega_{C}(D) \arrow[r, "\mathfrak{c}"] & \Omega_{C^{(p/k)}}(D') \arrow[r] & 0
    \end{tikzcd}
\end{center}
where $\mathcal{N}$ is the kernel of the Cartier operator that consists precisely of those forms that are exact. Therefore, one way to see that the Cartier operator is surjective on global sections is to see that $H^1(C^{(p/k)},\mathcal{N} )$, the first cohomology of $\mathcal{N}$, vanishes.

As $\mathcal{N}$ consists of exact forms, one can also consider it as the image of the de Rham differential. In order to have the correct vanishing orders in the sequences we want to construct, we define the following divisors: 
\begin{align*}
    D''=\sum_{i=1}^n \left \lfloor \frac{m_i}{p} \right \rfloor q_i'
\end{align*}
and 
\begin{align*}
    \tilde{D}=\sum_{i=1}^n \tilde{m}_i q_i
\end{align*}
with $\tilde{m}_i=m_i$ whenever $p$ divides $m_i$ and $\tilde{m}_i=m_i-1$ when $p$ does not divide $m_i$.

Now, we can consider the short exact sequence
\begin{center}
       \begin{tikzcd}
 0 \arrow[r] &\mathcal{O}_{C^{(p/k)}}(D'') \arrow[r] & (F_{C/k})_*\mathcal{O}_{C}(\tilde{D}) \arrow[r, "d"] & \mathcal{N} \arrow[r] & 0
    \end{tikzcd}
\end{center}
From the long exact sequence on cohomology one gets a surjection
\begin{center}
      \begin{tikzcd}
\dots \arrow[r] & H^1(C^{(p/k)},(F_{C/k})_*\mathcal{O}_{C}(\tilde{D})) \arrow[r, "d"] & H^1(C^{(p/k)},\mathcal{N} )\arrow[r] & 0
    \end{tikzcd}
\end{center}
Using that the Frobenius map is finite and Serre duality, one sees that there are isomorphisms
\begin{align*}
     H^1(C^{(p/k)},(F_{C/k})_*\mathcal{O}_{C}(\tilde{D})) \xrightarrow{\sim} H^1(C, \mathcal{O}_{C}(\tilde{D})) \xrightarrow{\sim} H^0(C, \Omega_C(-\tilde{D}))^\vee
\end{align*}
By construction it holds that
\begin{align*}
    deg(\tilde{D})\geq deg(D)-\#Supp(D) > 2g-2
\end{align*}
and we can conclude that $H^0(C, \Omega_C(-\tilde{D}))^\vee=0$. Therefore also $H^1(C^{(p/k)},\mathcal{N} )$ vanishes and the surjectivity of the Cartier operator on global sections follows.
\end{proof}

\section{Definition and Dimension Estimate of \texorpdfstring{$\Mgnexm$}{Mgnexm} and \texorpdfstring{$\GMgnexm$}{GMgnexm}} \label{sec:dimension}
\begin{definition}
    Let $(g,n)$ such that $2g-2+n>0$ and $\mathbf{m}=(m_1, \dots m_n)$ be an integer partition of $2g-2$. The stack $\Mgnexm$ over $\mathbb{F}_p$ is the substack of $\Mgn$ whose $S$-points are marked curves $(C, q_1, \dots q_n)$ over $S$ such that $\Omega_{C/S}(-\sum_{i=1}^n m_i q_i)$ has a fiberwise non-zero section and this section, which is unique up to rescaling, is exact.  We call this stack the Moduli stack of twisted exact canonical divisors of type $\mathbf{m}$.

    By $\GMgnexm$, we denote the stack over $\mathbb{F}_p$ with $S$-points $(C, q_1, \dots q_n, s)$ such that the marked curve $(C, q_1, \dots, q_n)$ is in $\Mgn$ and $s$ is an exact, fiberwise non-zero section of the module $ H^0( \Omega_{C/S}(-\sum_{i=1}^n m_i q_i))$.  We call this stack the Moduli stack of twisted exact differential forms of type $\mathbf{m}$.
\end{definition}

At first, we want to construct $\GMgnexm$ and $\Mgnexm$ explicitly. Note that after we have shown representability of these stacks, it is clear that  $\GMgnexm$ is a $\mathbb{G}_m$-torsor over $\Mgnexm$, so $\Mgnexm$ can be considered to be the projectivization of $\GMgnexm$. In particular, from a result on dimension and smoothness for $\GMgnexm$, a dimension and smoothness result for $\Mgnexm$ will follow immediately. Our general plan is to use the Cartier operator to define a vector bundle of exact differential forms on $\Mgn$ and then cut out the locus of those exact forms with correct vanishing order at the markings by closed conditions.

\begin{remark}\label{rem: divisibility}
    One sees that for $\GMgnexm$ and $\Mgnexm$ being nonempty, it is a necessary condition that all $m_i$ satisfy $p \nmid m_i+1$. Otherwise, there already would be a local obstruction to the exactness condition, as the form $x^{kp-1}dx$ cannot be integrated on $k[x]$.

\end{remark}

We will start by constructing $\GMgnexm$. For this, we need an additional auxiliary marking $q_{aux}$ where we allow differential forms to have a pole of large enough order $m_{aux}$. We do this, so the sheaves that we will construct are in fact vector bundles and the Cartier operator between them is surjective. To make this explicit, we have to begin by introducing some notation.

 From now on, we denote by $I^+ \subset \{1, \dots, n\}$ the subset of indices where $m_i$ is non-negative and by $I^-$ its complement.  Note that at a $q_i$ with $i \in I^-$ the differential forms of our interest have poles, while at a $q_i$ with $i \in I^+$ these forms have zeroes.

 Let $m_{aux} \in \N$ be an integer such that $p \nmid m_{aux}-1 $ and it holds
 \begin{align*}
     m_{aux}-\sum_{i \in I^-} m_i &> 2g-1+n
 \end{align*}
As we will need these inequalities later, we also want to state that the following inequalities hold automatically:
\begin{align*}
     m_{aux}-\sum_{i \in I^-} m_i &> 0 \\
     \left\lceil \frac{m_{aux}}{p} \right\rceil-\sum_{i \in I^-} \left \lfloor \frac{m_i}{p}\right \rfloor &> 0 
\end{align*}
 Furthermore, let $\pi: \mathcal{C}_{g,n} \to \mathcal{M}_{g,n}$ be the universal curve and $q_{aux}: \Mgn \to \Cgn$ be an additional section away from the universal sections $q_1, \dots, q_n$. By
 \begin{align*}
     F_{\mathcal{C}_{g,n}/\Mgn}: \mathcal{C}_{g,n} \to \mathcal{C}_{g,n}^{(p/\Mgn)}
 \end{align*}
we denote the relative Frobenius of the universal curve. On the Frobenius twist of the universal curve $\Cgn^{(p/\Mgn)}$ we consider the vector bundle 
    \begin{align*}
       (F_{\mathcal{C}_{g,n}/\Mgn})_* \Omega_{\mathcal{C}_{g,n}/\mathcal{M}_{g,n}}(-\sum_{i \in I^-} m_i q_i+m_{aux}q_{aux})
    \end{align*}
    By pushing it forward to the base $\Mgn$, as in the construction of the Hodge bundle on $\Mgn$, for example in \citep[Chapter XIII.2.]{arbarello2011geometry}, we obtain a sheaf on $\mathcal{M}_{g,n}$:
    \begin{align*}
        \xi := \pi_*(F_{\mathcal{C}_{g,n}/\Mgn})_* \Omega_{\mathcal{C}_{g,n}/\mathcal{M}_{g,n}}(-\sum_{i \in I^-} m_i q_i+m_{aux}q_{aux})
    \end{align*}
    Note that this is in fact a vector bundle, as we chose $m_{aux}$ such that we are twisting by a positive divisor.
    
    Because
         \begin{center}
    \begin{tikzcd}
       \mathcal{C}_{g,n} \arrow[rr, "F_{\mathcal{C}_{g,n}/\Mgn}"] \arrow[rd, "\pi"]& & \Cgn^{(p/\Mgn)} \arrow[ld, "\pi"] \\
       & \Mgn &
    \end{tikzcd}
    \end{center}
    commutes, we see that sections of $\xi$, when pulled back along a morphism $S \to \Mgn$ that corresponds to a smooth curve $C/S$ with markings $q_1, \dots, q_n$, are given by the $H^0(\mathcal{O}_S)$-module 
    \begin{align*}    
    H^0( \Omega_{C/S}(-\sum_{i \in I^-} m_i q_i+m_{aux}q_{aux}))
    \end{align*}
    By Riemann-Roch, which we can use as the degree of the line bundle is high enough as we added $q_{aux}$, we can compute
    \begin{align*}
        rk(\xi)= g-1-\sum_{i \in I^-} m_i+m_{aux}
    \end{align*}

    By an analogous construction we construct another vector bundle $\xi'$ on $\Mgn$ with sections of the pullback $S \to \Mgn$ to be 
    \begin{align*}
        H^0( \Omega_{C^{(p/S)}/S}(-\sum_{i \in I^-} \left \lfloor \frac{m_i}{p}\right \rfloor q'_i+\left \lceil \frac{m_{aux}}{p} \right \rceil q'_{aux}))
    \end{align*}
    Again, by Riemann-Roch we can compute as $m_{aux}$ is large enough that 

    \begin{align*}
        rk(\xi') =g-1 -\sum_{i \in I^-} \left \lfloor \frac{m_i}{p} \right \rfloor +\left \lceil \frac{m_{aux}}{p} \right \rceil
    \end{align*}
    where $q_i'$ is the image of the marking $q_i$ under the relative Frobenius and similarly $q'_{aux}$. 
    The pushforward of the Cartier operator now gives a morphism of vector bundles:
    \begin{align*}
       \pi_* \mathfrak{c}: \xi \to \xi'
    \end{align*}

\begin{lemma}\label{lem:car_sur}
    The map of vector bundles
    \begin{align*}
        \pi_* \mathfrak{c}: \xi \to \xi'
    \end{align*}
    is surjective.
\end{lemma}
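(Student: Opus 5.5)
Surjectivity of a map of vector bundles on $\Mgn$ can be checked fiberwise at geometric points, by Nakayama's lemma: the cokernel is coherent, so it vanishes if and only if it vanishes after tensoring with every residue field, and we may then pass to an algebraic closure. So I would fix a geometric point $\operatorname{Spec}(k) \to \Mgn$ with $k$ algebraically closed, corresponding to a smooth marked curve $(C, q_1, \dots, q_n)$ together with the auxiliary point $q_{aux}$. The first step is to check that the fibers of $\xi$ and $\xi'$ at this point are the spaces of global sections
\begin{align*}
H^0(C,\Omega_C(D)) \quad\text{and}\quad H^0\Big(C^{(p/k)},\Omega_{C^{(p/k)}}(D')\Big),
\end{align*}
where $D=-\sum_{i\in I^-} m_i q_i + m_{aux} q_{aux}$ and $D'$ is the divisor attached to $D$ in Theorem~\ref{thm:car_op}. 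Since each $m_i$ with $i\in I^-$ is negative, $\lceil -m_i/p\rceil = -\lfloor m_i/p\rfloor$, so this $D'$ is exactly the divisor used to define $\xi'$. The identification of fibers is cohomology and base change: the relevant $H^1$ vanish because $\deg D > 2g-2$ and $\deg D' > 0 \geq$ well within range, and the ranks are computed by Riemann--Roch, as the paper already uses. Under this identification, the fiber of $\pi_*\mathfrak{c}$ is $H^0(\mathfrak{c})$ for the Cartier operator of Theorem~\ref{thm:car_op} over $k$, because the local formula for $\mathfrak{c}$ commutes with base change.

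It then suffices to apply Theorem~\ref{thm:car_sur} to $D$. Its support consists of the points $q_i$ with $i\in I^-$ together with $q_{aux}$, so $\#\mathrm{Supp}(D)\le n+1$. Moreover
\begin{align*}
\deg D = m_{aux}-\sum_{i\in I^-} m_i > 2g-1+n \geq 2g-2+\#\mathrm{Supp}(D).
\end{align*}
Hence $H^0(\mathfrak{c})$ is surjective on every geometric fiber, and therefore $\pi_*\mathfrak{c}$ is surjective.

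The main obstacle is the base change step: verifying that $\pi_*$ of the Frobenius-pushed sheaves commutes with passing to fibers, and that $\pi_*\mathfrak{c}$ restricts to the Cartier operator of the fiber. I would handle the first point with the vanishing of $H^1$ on all fibers (Grauert), using that $F_{C/S}$ is finite, so pushing forward along it does not change cohomology. For the second point I would use that $C^{(p/S)}$ and $\mathfrak{c}$ are formed compatibly with base change $S'\to S$, which is visible from the étale-local formula in Theorem~\ref{thm:car_op}.
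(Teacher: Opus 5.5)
Your proposal is correct and follows the same route as the paper: you reduce via Nakayama and base change to surjectivity of $H^0(\mathfrak{c})$ on geometric fibers, and then invoke Theorem~\ref{thm:car_sur}. You are more explicit than the paper in two places, namely in checking $\deg D > 2g-2+\#\mathrm{Supp}(D)$ from the choice of $m_{aux}$ and in matching the divisor $D'$ of Theorem~\ref{thm:car_op} with the one defining $\xi'$. One small correction: the vanishing of $H^1(C,\Omega_C(D))$ requires $\deg D>0$, not $\deg D>2g-2$, though this holds anyway here.
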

\begin{proof}
    We have to check that $\pi_* \mathfrak{c}$ is surjective on stalks. By the proper base change theorem, it is sufficient to see that for a local ring $R$ and a smooth $n$-marked curve $C$ over $R$ the map
        \begin{align*}
        H^0(\mathfrak{c}): H^0((F_{C/R})_*\Omega_{C}(D)) \to H^0(\Omega_{C^{(p/R)}}(D'))
    \end{align*}
    is surjective.
    
    As the source and the target of this map are finitely generated, by Nakayama's lemma this is equivalent to stating that the map is surjective after passing to the residue field. To see this, one might pass as well to the algebraic closure of the residue field and we have seen the desired result for algebraically closed fields in Theorem~\ref{thm:car_sur}.
\end{proof}

    From this lemma, we know that the kernel of $\pi_* \mathfrak{c}$ in the category of coherent $\mathcal{O}_{\mathcal{M}_{g,n}}$-modules is still a vector bundle and we can compute its rank to be
    \begin{align*}
        rk(\ker( \pi_*\mathfrak{c}))= rk(\xi )-rk(\xi')= - \sum_{i \in I^-} \left(m_i- \left \lfloor \frac{m_i}{p}\right \rfloor \right) + m_{aux}- \left \lceil \frac{m_{aux}}{p} \right \rceil 
    \end{align*}
    Sections of $ker(\pi_*\mathfrak{c})$ are precisely those sections of $\xi$ that are exact. We formalize these insights in the following theorem about its total space. For the construction of total spaces of vector bundles, see for example \citep[Exercise II.5.18]{hartshorne2013algebraic}.
    \begin{lemma}
        The total space of the vector bundle $\ker(\pi_*\mathfrak{c})$ on $\Mgn$ that we will denote by $V$, is smooth of dimension
        \begin{align*}
            dim(V)=3g-3+n- \sum_{i \in I^-} \left(m_i- \left \lfloor \frac{m_i}{p}\right \rfloor \right) + m_{aux}- \left \lceil \frac{m_{aux}}{p} \right \rceil 
        \end{align*}
    \end{lemma}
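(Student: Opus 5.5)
The plan is to deduce the statement from three facts: $\Mgn$ is smooth, the kernel is a vector bundle, and total spaces of vector bundles are smooth over their base.

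First, I recall that $\Mgn$ is a smooth Deligne--Mumford stack over $\mathbb{F}_p$ of dimension $3g-3+n$. This holds under the standing assumption $2g-2+n>0$, with the usual convention when $(g,n)$ is small. Next, Lemma~\ref{lem:car_sur} shows that $\pi_*\mathfrak{c}\colon \xi\to\xi'$ is a surjection of locally free sheaves. Its kernel $\ker(\pi_*\mathfrak{c})$ is therefore locally free, since locally on $\Mgn$ the surjection splits and the kernel is a direct summand of $\xi$. The rank of the kernel is constant, equal to $rk(\xi)-rk(\xi')$, which was computed above as
\begin{align*}
    r = - \sum_{i \in I^-} \left(m_i- \left \lfloor \frac{m_i}{p}\right \rfloor \right) + m_{aux}- \left \lceil \frac{m_{aux}}{p} \right \rceil .
\end{align*}
I will double-check that both $rk(\xi)$ and $rk(\xi')$ really are constant. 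This is where the choice of $m_{aux}$ matters: the inequality $m_{aux}-\sum_{i\in I^-}m_i>2g-1+n$, and the corresponding one after dividing by $p$, force the degrees above $2g-2$ fiberwise. Cohomology and base change then gives vanishing of $H^1$ and local freeness of the pushforwards.

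Second, I form $V=\mathbb{V}(\ker(\pi_*\mathfrak{c})^\vee)$, the relative spectrum of the symmetric algebra of the dual, following \citep[Exercise II.5.18]{hartshorne2013algebraic}. The projection $V\to\Mgn$ is smooth of relative dimension $r$, because étale locally on an atlas $U\to\Mgn$ it is isomorphic to $\mathbb{A}^r_U\to U$. Smoothness and relative dimension are étale local on the source and target, so $V$ is a smooth algebraic stack, and it is Deligne--Mumford because it is representable over $\Mgn$.

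Finally, I add dimensions: $\dim V=\dim\Mgn+r=3g-3+n+r$, which is the claimed formula. The only step that needs care is the constancy of rank, that is, justifying base change for $\xi$ and $\xi'$ via the vanishing of $H^1$ of the twisted relative dualizing sheaves on every geometric fiber. I expect this to follow directly from the degree inequalities imposed on $m_{aux}$. I will also need $\lceil m_{aux}/p\rceil-\sum_{i\in I^-}\lfloor m_i/p\rfloor>0$ so that the twist on $C^{(p/S)}$ is by an effective divisor of positive degree, making $H^1$ vanish there too.
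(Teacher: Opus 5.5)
Your proposal is correct and follows the same route as the paper: $\Mgn$ is smooth of dimension $3g-3+n$, $\ker(\pi_*\mathfrak{c})$ is a vector bundle of rank $rk(\xi)-rk(\xi')$ by Lemma~\ref{lem:car_sur}, and the total space of a vector bundle over a smooth stack is smooth, with dimension equal to $\dim \Mgn$ plus the rank. Your additional checks, namely the local splitting of the surjection and cohomology and base change using the degree bounds on $m_{aux}$, are facts the paper establishes in the discussion before the lemma rather than inside its proof.
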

\begin{proof}
    $V$ is the total space of a vector bundle on a smooth stack, so itself is smooth. We have computed the rank of $ker(\pi_*\mathfrak{c})$ above and the dimension of $\Mgn$ is known to be $3g-3+n$.
\end{proof}
Clearly, $\GMgnexm$ is a substack of $V$. We now want to analyze further which conditions cut out $\GMgnexm$ out of $V$.
\begin{theorem} \label{thm: dim_bound}
    The stack $\GMgnexm$ is a Deligne-Mumford stack and if it is nonempty, we have the following lower bound of its dimension:
    \begin{align*}
        \dim(\GMgnexm) \geq g-1+n+\sum_{i=1}^n \left \lfloor \frac{m_i}{p}\right \rfloor 
    \end{align*}
\end{theorem}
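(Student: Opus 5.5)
The plan is to show that $\GMgnexm$ is a locally closed substack of the total space $V$ of $\ker(\pi_*\mathfrak{c})$, cut out by few equations. We then count equations against $\dim(V)$, as computed in the preceding lemma.

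\textbf{Step 1: the conditions.} A point of $V$ over $S\to\Mgn$ is an exact section $s$ of $\Omega_{C/S}(-\sum_{i\in I^-}m_iq_i+m_{aux}q_{aux})$. Such a point lies in $\GMgnexm$ iff three things hold. First, $s$ has no pole at $q_{aux}$. Second, $s$ vanishes to order $\geq m_i$ at $q_i$ for each $i\in I^+$. Third, $s$ is fiberwise non-zero with leading coefficient of exact order $m_i$ at every $q_i$. The third condition is open. For the first two, I would work locally on $\Mgn$, where the sections $q_i,q_{aux}$ admit relative local coordinates. Expanding $s=\sum a_j t^j dt$ at each marking, the coefficients $a_j$ are regular functions on $V$; intrinsically, they are the components of the map from $V$ to the appropriate relative jet bundle along the section. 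The closed conditions are then $a_{-j}=0$ for $1\le j\le m_{aux}$ at $q_{aux}$, and $a_j=0$ for $0\le j\le m_i-1$ at $q_i$ with $i\in I^+$. This exhibits $\GMgnexm$ as a locally closed substack of $V$. Since $V$ is algebraic and representable over the Deligne--Mumford stack $\Mgn$, the substack $\GMgnexm$ is Deligne--Mumford.

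\textbf{Step 2: exactness removes equations.} By the local formula for $\mathfrak{c}$ in Theorem~\ref{thm:car_op}, on the kernel we have $a_{kp-1}^{(p)}=0$. Over a reduced base, or on geometric points, this gives $a_{kp-1}=0$. So the coefficients with index $\equiv -1 \pmod p$ impose no condition on the underlying topological space. Among $j\in\{1,\dots,m_{aux}\}$, the indices with $-j\equiv -1$ number $\lceil m_{aux}/p\rceil$, leaving $m_{aux}-\lceil m_{aux}/p\rceil$ equations at $q_{aux}$. Among $\{0,\dots,m_i-1\}$, exactly $\lfloor m_i/p\rfloor$ indices are $\equiv -1$, leaving $m_i-\lfloor m_i/p\rfloor$ equations at each $q_i$ with $i\in I^+$. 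Let $Z\subset V$ be the closed substack defined by only these remaining equations. Then $Z$ and the closure of $\GMgnexm$ have the same underlying points near $\GMgnexm$, and $\GMgnexm$ is open in $Z$. Since dimension is topological, the nilpotent issue disappears.

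\textbf{Step 3: counting.} Krull's principal ideal theorem on the smooth stack $V$ (via a smooth atlas) gives that every nonempty component of $Z$ has dimension at least $\dim V$ minus the number of equations. Using $\sum_{I^+}m_i+\sum_{I^-}m_i=2g-2$, this equals
\begin{align*}
3g-3+n-\sum_{i\in I^-}\Big(m_i-\Big\lfloor\frac{m_i}{p}\Big\rfloor\Big)-\sum_{i\in I^+}\Big(m_i-\Big\lfloor\frac{m_i}{p}\Big\rfloor\Big)=g-1+n+\sum_{i=1}^n\Big\lfloor\frac{m_i}{p}\Big\rfloor .
\end{align*}
The terms in $m_{aux}$ cancel. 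The bound therefore passes to the nonempty open substack $\GMgnexm$.

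I expect the main obstacle to be Step 2. One must make precise that the defining equations of $\GMgnexm$ can be taken to be exactly the $a_j$ with $j\not\equiv -1 \pmod p$. This needs two things. First, the local coordinate expansions must glue to honest functions on $V$, which I would handle with jet bundles along the sections. Second, the Frobenius twist in the Cartier operator only forces $a_{kp-1}^{(p)}=0$ rather than $a_{kp-1}=0$. I would handle this second point by passing to reduced structures, as above, since only a dimension estimate is claimed.
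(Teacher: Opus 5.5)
Your proposal is correct and follows the paper's proof. Both arguments realize $\GMgnexm$ as a locally closed substack of $V$; the paper removes the zero section first to get $V_0$, where you impose fiberwise non-vanishing as an open condition. Both then use exactness to kill the coefficients $a_{kp-1}$ and count the remaining $\sum_{i\in I^+}(m_i-\lfloor m_i/p\rfloor)+m_{aux}-\lceil m_{aux}/p\rceil$ equations against $\dim V$. Your jet-bundle and Krull arguments make explicit what the paper leaves implicit. The nilpotent worry in Step 2 is harmless even scheme-theoretically: $V$ is smooth, hence reduced, so $a_{kp-1}^{p}=0$ already forces $a_{kp-1}=0$ as functions on $V$.
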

\begin{proof}
    As we do not want to have differential forms in $\GMgnexm$ that are constantly zero, at first we have to remove the zero section from $V$. This open substack will be denoted by $V_0$. Removing the zero section does not change the dimension, so it also holds that
    \begin{align*}
        dim(V_0)=3g-3+n- \sum_{i \in I^-} \left(m_i- \left \lfloor \frac{m_i}{p}\right \rfloor \right) + m_{aux}- \left \lceil \frac{m_{aux}}{p} \right \rceil .
    \end{align*}
    Note that an element $(C, q_1, \dots, q_n,s)$ of $V_0$ is in $\GMgnexm$ iff $s$ has a zero of order at least $m_i$ at $q_i$ for each $i \in I^+$, so at those indices that correspond to zeroes, and $s$ has no pole along $q_{aux}$. We consider the formal completion of the total curve along the markings $q_i$. Locally, this completion is isomorphic to $Spf(R_{ij}[[t]])$ for some ring $R_{ij}$ where $t$ is a uniformizer of the divisor $q_i$, so a function that vanishes along $q_i$ with order one. This is possible because the curve $C$ is smooth over $S$ and the divisor $q_i$ is locally principal. On these formal completions, we can now write
    \begin{align*}
        s= \sum_{k=0}^{\infty} a_k t^k dt
    \end{align*}
    
    The condition to vanish with order at least $m_i$ along $q_i$ is equivalent to $a_k=0$ for $0 \leq k < m_i$. Note that this condition is well-defined, i.e. independent of the choice of local coordinates, as the order of vanishing of a function is. Already the condition of $s$ being exact forces $a_k=0$ when $k \equiv -1 \ mod  \ p$. Note that because the vanishing order of differential forms and integrability is independent of choices, also the number of conditions is independent of the choices we made. 
    
    We conclude, together with the insight that $p \nmid m_i+1$, which we gained from Remark~$\ref{rem: divisibility}$, that the condition of having a zero of order at least $m_i$ along $q_i$ is an intersection of 
    \begin{align*}
        m_i- \left\lfloor \frac{m_i}{p} \right\rfloor
    \end{align*}
    many hyperplanes in $V_0$.

    Similarly, we can consider the formal completion along $q_{aux}$ that locally is given by the formal scheme $Spf(R_{aux,j}[[t]])$ for some ring $R_{aux,j}$ where $t$ is a uniformizer of the divisor $q_{aux}$. Now we can write on this formal neighborhood
     \begin{align*}
        s= \sum_{k=-m_{aux}}^{\infty} a_k t^k dt
    \end{align*}

The requirement of $s$ not having a pole along $q_{aux}$ is equivalent to all $a_k=0$ when $k <0$. Still, those $a_k$ with $k \equiv -1 \ mod  \ p$ are already $0$ because $s$ is exact. Therefore, this condition is the intersection of 
\begin{align*}
    m_{aux}- \left \lceil \frac{m_{aux}}{p}\right \rceil
\end{align*}
many hyperplanes in $V_0$. By noting that $\GMgnexm$ is cut out by the intersection of hyperplanes from $V_0$, we see that it is a closed substack of $V_0$.  Because $\Mgn$ is a Deligne-Mumford stack, so is $V$. As we have constructed $\GMgnexm$ as a locally closed substack of $V$, it is Deligne-Mumford itself. Counting the number of these hyperplanes that cut out $\GMgnexm$ gives us the desired lower bound of its dimension, in the case that $\GMgnexm$ is nonempty:
\begin{align*}
    \dim \GMgnexm &\geq dim(V_0) - \sum_{i \in I^+} \left( m_i- \left\lfloor \frac{m_i}{p} \right\rfloor \right) -  \left( m_{aux}- \left \lceil \frac{m_{aux}}{p}\right\rceil \right) \\
    &=3g-3+n -\sum_{i=1}^n m_i + \sum_{i =1}^n \left\lfloor \frac{m_i}{p} \right\rfloor \\
    &= g-1+n+\sum_{i =1}^n  \left\lfloor \frac{m_i}{p} \right\rfloor
\end{align*}
In this calculation, we used that the $m_i$ sum up to $2g-2$.
\end{proof}
We want to use these insights to see that also $\Mgnexm$ is representable.
\begin{corollary} \label{cor: Mgnexm_rep}
    The Moduli functor $\Mgnexm$ is representable by a Deligne-Mumford stack.
\end{corollary}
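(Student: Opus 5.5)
We plan to realize $\Mgnexm$ as the quotient of $\GMgnexm$ by the natural scaling action of $\mathbb{G}_m$, and to deduce representability from Theorem~\ref{thm: dim_bound}. The multiplicative group acts on $V$ by scaling the fibers of the vector bundle $\ker(\pi_*\mathfrak{c})$. This action preserves the open substack $V_0$, since scaling a fiberwise non-zero section keeps it fiberwise non-zero. It also preserves the closed substack $\GMgnexm \subset V_0$: exactness is preserved by scaling, as is the vanishing of the coefficients $a_k$ for $k<m_i$ at $q_i$ ($i\in I^+$) and for $k<0$ at $q_{aux}$, because these are linear conditions. So $\mathbb{G}_m$ acts on $\GMgnexm$ over $\Mgn$.

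Next we will define a morphism $\GMgnexm \to \Mgnexm$ by forgetting $s$ and show that it is a $\mathbb{G}_m$-torsor.

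\textbf{Surjectivity.} Given $(C,q_1,\dots,q_n)\in\Mgnexm(S)$, there is locally on $S$ a fiberwise non-zero section of $\Omega_{C/S}(-\sum m_iq_i)$. This uses that $\pi_*\Omega_{C/S}(-\sum m_i q_i)$ is a line bundle on such families, which we argue from the uniqueness up to scaling in the definition together with cohomology and base change, the degree being $0$. The line bundle is trivial locally on $S$, and any generator is exact by assumption. Hence the map is surjective locally on $S$.

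\textbf{Torsor property.} Two such generators differ by a unit of $\mathcal{O}_S$. This gives the isomorphism $\mathbb{G}_m\times\GMgnexm \cong \GMgnexm\times_{\Mgnexm}\GMgnexm$.

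It follows that the functor $\Mgnexm$ is equivalent to the quotient stack $[\GMgnexm/\mathbb{G}_m]$. Alternatively, it is the image of $\GMgnexm$ in the projectivization $\mathbb{P}(\ker(\pi_*\mathfrak{c}))$ over $\Mgn$. We will prefer the latter description. There $\Mgnexm$ is cut out of $\mathbb{P}(\ker(\pi_*\mathfrak{c}))$ by the same linear, hence homogeneous, conditions on the coefficients $a_k$, so it is a closed substack of the projective bundle over $\Mgn$. It is therefore Deligne--Mumford, since $\Mgn$ is and projective bundles are representable.

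The main obstacle will be checking that the fiberwise condition in the definition of $\Mgnexm$, namely that the unique-up-to-scaling section is exact, matches the family condition over arbitrary (e.g.\ non-reduced) bases $S$. Concretely, the sheaf $\pi_*\Omega_{C/S}(-\sum m_iq_i)$ must commute with base change and be invertible on the locus in question, and the point of $\mathbb{P}(\ker(\pi_*\mathfrak{c}))$ must determine the marked curve together with a line of exact forms. For this we will use that $\Omega_{C/S}(-\sum m_iq_i)$ has degree $0$, so on each geometric fiber it has a non-zero section iff it is trivial. The functor of $(C,q_i)$ for which it is trivial is then representable via the relative Picard scheme, and on that locus the pushforward is a line bundle. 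Intersecting with the closed exactness condition cut out by $\ker(\pi_*\mathfrak{c})$ identifies $\Mgnexm$ with the closed substack of the projective bundle described above.
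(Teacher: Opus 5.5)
Your proof is correct. Its first half (the scaling action, the forgetful map, the torsor property) spells out what the paper passes over with ``clearly''; the real difference is how you obtain the Deligne--Mumford property.

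The paper stays with $[\GMgnexm/\mathbb{G}_m]$. It is an Artin stack, and because the zero section was removed the $\mathbb{G}_m$-action is declared proper and free, which is taken to suffice. You instead pass to the projectivization: the closed, $\mathbb{G}_m$-stable substack $\GMgnexm\subset V_0$ descends along $V_0\to\mathbb{P}(\ker(\pi_*\mathfrak{c}))$, because the defining conditions are linear. So $\Mgnexm$ is a closed substack of a projective bundle over $\Mgn$, and it is Deligne--Mumford by representability.

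Your route buys several things:
\begin{enumerate}
\item You never analyse stabilizers. The paper's ``free'' is loose: an automorphism $\alpha$ of the marked curve may satisfy $\alpha^*s=\lambda s$ with $\lambda\neq 1$. What is really needed is that such pairs $(\alpha,\lambda)$ inject into the finite reduced group $\mathrm{Aut}(C,q_1,\dots,q_n)$.
\item You address the family-versus-fiberwise issue, and the need to read the definition locally on $S$, which the paper does not discuss.
\item You get properness of $\Mgnexm\to\Mgn$. Since the line of forms is unique, this map is a proper monomorphism, hence a closed immersion. Your Picard-scheme remark shows this directly.
\end{enumerate}
The paper's version is shorter and keeps in view the torsor structure that Corollary~\ref{cor:Mgnexm_smth} relies on.

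One simplification: you do not need cohomology and base change to show that $\pi_*\Omega_{C/S}(-\sum_i m_iq_i)$ is invertible. A fiberwise non-zero section of a line bundle of fiber degree $0$ has empty zero scheme, so it already trivializes the bundle, and its pushforward is $\pi_*\mathcal{O}_C=\mathcal{O}_S$.
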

\begin{proof}
    Clearly $\Mgnexm$ is a $\mathbb{G}_m$-quotient of $\GMgnexm$ where $\mathbb{G}_m$ acts by multiplication on the exact differential form $s$. Therefore, $\Mgnexm$ is an Artin stack. Recall that we have removed the zero section, so the action of $\mathbb{G}_m$ is proper and free. This is enough to see that also $\Mgnexm$ is Deligne-Mumford. 
\end{proof}
\section{Computation of the Tangent Space of \texorpdfstring{$\GMgnexm$}{GMgnexm}} \label{sec:tangent_space}
  In this section, we want to show smoothness of the stacks $\GMgnexm$ and $\Mgnexm$ by computing the tangent space of $\GMgnexm$. For this, let $k$ be an algebraic closure of $\mathbb{F}_p$. Note that it will be enough to show that the dimension of the tangent space of $\GMgnexm$ at all $k$-points is constant and is of the correct dimension.

  We have constructed $\GMgnexm$ as a closed substack of $V_0$, so we want to understand when a tangent vector to an object $(C, q_1, \dots, q_n,s)$ in $V_0$ also is a tangent vector to $\GMgnexm$. At first, we want to understand the tangent space of $V_0$.
    \begin{lemma} \label{lem:Vdim}
         The tangent space to a $k$-point $(C, q_1, \dots q_n, s)$ in $V_0$ is given by
    \begin{align*}
        T_{(C, q_1, \dots q_n, s)} V_0= H^1(\Omega_C^\vee(-
        \sum_{i=1}^n q_i)) \oplus H^0(ker(\pi_*\mathfrak{c})_{(C, q_1, \dots q_n)})
    \end{align*}
    So in particular the dimension of the tangent space of $V_0$ at $(C, q_1, \dots, q_n, s)$ is
    \begin{align*}
        dim_k T_{(C, q_1, \dots, q_n,s)} V_0=3g-3+n- \sum_{i \in I^-} \left(m_i- \left \lfloor \frac{m_i}{p}\right \rfloor \right) + m_{aux}- \left \lceil \frac{m_{aux}}{p} \right \rceil .
    \end{align*}
    \end{lemma}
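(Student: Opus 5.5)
The plan is to use that $V_0$ is an open substack of the total space $V$ of the vector bundle $E:=\ker(\pi_*\mathfrak{c})$ on $\Mgn$. Removing the zero section changes nothing infinitesimally, so $T_{(C,q_1,\dots,q_n,s)}V_0=T_{(C,q_1,\dots,q_n,s)}V$. The projection $V\to\Mgn$ is smooth, so there is a short exact sequence of tangent spaces
\begin{align*}
0\to T_{\mathrm{vert}} \to T_{(C,q_1,\dots,q_n,s)}V \to T_{(C,q_1,\dots,q_n)}\Mgn \to 0 .
\end{align*}
The vertical tangent space of the total space of a vector bundle at a point $s$ of the fiber is canonically the fiber itself. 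Here this is $E_{(C,q_1,\dots,q_n)}$, which by Lemma~\ref{lem:car_sur} and proper base change is the kernel of $H^0(\mathfrak{c})$ on $H^0(\Omega_C(-\sum_{i\in I^-}m_iq_i+m_{aux}q_{aux}))$. This is the space written $H^0(\ker(\pi_*\mathfrak{c})_{(C,q_1,\dots,q_n)})$ in the statement.

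For the base, I will use the standard deformation theory of smooth marked curves. First order deformations of $(C,q_1,\dots,q_n)$ are classified by $H^1(T_C(-\sum q_i))=H^1(\Omega_C^\vee(-\sum_{i=1}^n q_i))$, and $H^0$ of this sheaf vanishes by stability. Riemann–Roch then gives its dimension as $-(\deg T_C-n)+g-1=3g-3+n$.

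The sequence above splits as a sequence of $k$-vector spaces, which gives the direct sum decomposition in the statement. It is not canonical, but it is enough for the dimension count. Adding $\dim T\Mgn=3g-3+n$ to the rank of $E$ computed before Lemma~\ref{lem:car_sur} yields the stated dimension. A more intrinsic splitting can be obtained by trivializing $E$ over the first order neighbourhood of the point, since $E$ is locally free. Then $T_{s}V = T\Mgn\oplus E_{x}$ holds via $V\times_{\Mgn}\mathrm{Spec}\,k[\epsilon]\cong \mathrm{Spec}\,k[\epsilon]\times E_x$.

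The only subtle point is that the fiber of $E$ at the $k$-point really is the space of exact forms on $C$ with the given pole orders. One must show that taking the kernel commutes with base change to the point. This follows because $\pi_*\mathfrak{c}$ is a surjection of vector bundles (Lemma~\ref{lem:car_sur}), so its kernel is a vector bundle whose formation commutes with arbitrary base change. The fibers of $\xi$ and $\xi'$ in turn commute with base change by cohomology and base change, because $H^1$ vanishes in the chosen degrees. I expect this base-change verification to be the main step requiring care. The remaining steps are formal.
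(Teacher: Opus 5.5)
Your proposal is correct and is essentially the paper's argument: the paper also uses that $V$ is étale locally a product $\Mgn \times H^0(\ker(\pi_*\mathfrak{c}))$ (your ``intrinsic splitting''), so the tangent space is $T\Mgn$ plus the fiber, with $T\Mgn = H^1(\Omega_C^\vee(-\sum_{i=1}^n q_i))$ and the rank of $\ker(\pi_*\mathfrak{c})$ supplying the dimension count. Your extra checks --- that $V_0$ is open in $V$, and that the fiber of $\ker(\pi_*\mathfrak{c})$ at the point is the space of exact forms on $C$ via Lemma~\ref{lem:car_sur} and $H^1$-vanishing --- fill in details the paper leaves implicit.
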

    \begin{proof}
    This is a classical result about the tangent space of total spaces of vector bundles, compare \citep[Exercise II.5.18]{hartshorne2013algebraic}: Étale locally on $\Mgn$ it holds that
    \begin{align*}
        V_0 \cong \Mgn \times H^0(\ker(\pi_*\mathfrak{c}))
    \end{align*}
    so 
    \begin{align*}
        T_{(C, q_1, \dots, q_n, s)}V_0= T_{(C,q_1, \dots, q_n)} \Mgn \oplus T_{(C, q_1, \dots, q_n)}\ker(\pi_*\mathfrak{c})
    \end{align*}
    The tangent space of $\Mgn$ at $(C, q_1, \dots, q_n)$ is known to be $H^1(\Omega_C^\vee(-\sum_{i=1}^n q_i))$, this can for example be found in \citep[Section 3.B.]{harris1998moduli}. Because the tangent space to a point in an affine space is the affine space itself, the statement follows.
    \end{proof}

To understand better which tangent vectors to $V_0$ are also tangent to $\GMgnexm$, we will introduce a local-global principle. At first, we need to understand what we want the local deformations to be.
    
 \begin{definition} \label{def: loc_def_func}
    Let $m \in \N$ such that $p \nmid m+1$. The deformation functor $D^m_{loc}$ maps a local Artin algebra $A$ over $k$ to the set of pairs $(s, \mathcal{L})$ such that $s: Spec(A) \to Spec(A[[t]])$ is a section that reduces to the zero section corresponding to $k[[t]] \to k $ given by $t \mapsto 0$ and a Cartier divisor $\mathcal{L}$ on $Spec(A[[t]])$ that reduces to the Cartier divisor $\langle t^m \rangle$ on $Spec(k[[t]])$.

    The subfunctor $D^m_{loc, ex}$ parametrizes those pairs, where $\mathcal{L}$ is generated by the formal derivative of a function $f$ on $A[[t]]$. Deformations that deform the section to the section that corresponds to $c \in A$ and the Cartier divisor to the one that is generated by $(t-c)^m$ are called good. The subfunctor of good deformations is called $D^m_{loc, good}$. As $p \nmid m+1$, this is also a subfunctor of $D^m_{loc, ex}$.

    To summarize, we have an inclusion of functors
    \begin{align*}
        D^m_{loc, good} \subseteq D^m_{loc, ex} \subseteq D^m_{loc}
    \end{align*}

    Similarly, we define the deformation functor $D^{m_{aux}, q_{aux}}_{loc}$ for a fixed section $q_{aux}$ that reduces to the zero section. It maps a local Artin $k$-algebra to the set of Cartier divisors $\mathcal{L}$ on $Spec(A[[t]])$ that have a pole of order at most $m_{aux}$ at $q_{aux}$ and that reduces to the trivial Cartier divisor on $Spec(k[[t]])$ that is generated by $ u $ where $u$ is a unit in $k[[t]]$.

    It contains the subfunctor $D^{m_{aux}, q_{aux}}_{loc, ex}$ that takes as values only those Cartier divisors that are generated by the formal derivatives of meromorphic functions on $A[[t]]$.
\end{definition}

\begin{remark}
    As we fixed $q_{aux}$, so it is not part of the global Moduli functor that we want to study, we also do not consider any deformations of $q_{aux}$ in the local deformation functor.
\end{remark}
Now we want to compute the dimension of these local deformation spaces:
\begin{lemma}\label{lem:dim_loc}
It holds that
\begin{align*}
    &&\dim_k D^m_{loc, good}(k[\epsilon])&=1 \\
    &&\dim_k D^m_{loc, ex}(k[\epsilon]) &= m- \left \lfloor \frac{m}{p}\right \rfloor +1 \\
    &&\dim_k D^{m_{aux}, q_{aux}}_{loc,ex}(k[\epsilon ]) &= m_{aux} - \left \lceil \frac{m_{aux}}{p}\right \rceil
\end{align*}
\end{lemma}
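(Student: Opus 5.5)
Each of the three functors is evaluated on $k[\epsilon]$ by writing down an explicit parametrization of first order deformations and then imposing the linear condition that defines the subfunctor. A first order deformation of the section $t\mapsto 0$ is given by $t\mapsto c\epsilon$ with $c\in k$. A first order deformation of the Cartier divisor $\langle t^m\rangle$ is generated by $t^m+\epsilon h(t)$ with $h\in k[[t]]$. Since the generator only matters up to units $1+\epsilon u(t)$ and $t^m u(t)$ absorbs all of $t^m k[[t]]$, we may take $h$ to be a polynomial of degree $<m$. Thus $D^m_{loc}(k[\epsilon])\cong k\oplus k^m$, with coordinates $(c,b_0,\dots,b_{m-1})$ where $h=\sum_{j<m}b_jt^j$.

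\textbf{Good deformations.} These are the pairs $(c,(t-c\epsilon)^m)=(c,t^m-mc\epsilon t^{m-1})$. They form the image of the linear map $c\mapsto (c,0,\dots,0,-mc)$, which is injective because the first coordinate is $c$. This gives dimension $1$ regardless of whether $p\mid m$.

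\textbf{Exact deformations.} Here the condition is that some unit multiple $(1+\epsilon u)(t^m+\epsilon h)$ is a formal derivative $f'$ with $f\in k[\epsilon][[t]]$. By the criterion recalled at the beginning of Section~\ref{sec: cartier} (exactness means the coefficients of $t^{kp-1}$ vanish), this happens iff every coefficient of $t^{kp-1}$ in $t^m+\epsilon(h+t^mu)$ vanishes. The term $t^m$ contributes nothing, since $p\nmid m+1$. For exponents $kp-1\ge m$, choosing $u$ suitably kills these coefficients without changing the class of the divisor. So the only constraints are $b_j=0$ for $j<m$ with $j\equiv -1 \bmod p$. The number of such $j$ is the number of $k\ge1$ with $kp\le m$, namely $\lfloor m/p\rfloor$. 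Since the section coordinate $c$ is unconstrained, the dimension is $m-\lfloor m/p\rfloor+1$.

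\textbf{Auxiliary functor.} Deformations of the trivial divisor with pole order at most $m_{aux}$ are generated by $1+\epsilon h$ with $h\in t^{-m_{aux}}k[[t]]$, taken modulo units, i.e.\ modulo $k[[t]]$. So we may take $h=\sum_{j=-m_{aux}}^{-1}b_jt^j$, which gives an $m_{aux}$-dimensional space. As before, exactness (after adjusting by a unit to kill the regular $t^{kp-1}$ terms) is equivalent to $b_j=0$ for $-m_{aux}\le j\le -1$ with $j\equiv -1\bmod p$. There are $\lceil m_{aux}/p\rceil$ such $j$, namely $j=-1,-p-1,\dots$, and the hypothesis $p\nmid m_{aux}-1$ ensures the count is exactly this. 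This gives $m_{aux}-\lceil m_{aux}/p\rceil$.

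The main obstacle I expect is justifying that generators can be normalized as above, so that the exactness conditions are precisely linear conditions on the $b_j$ and nothing is lost from the unit ambiguity. Concretely, I must check two things: that exactness of some generator is equivalent to the stated vanishing (using that the correction $\epsilon t^mu$ or $\epsilon u$ only affects the regular part), and that distinct normalized $h$ give distinct divisors. I would verify both by direct coefficient comparison in $k[\epsilon][[t]]$.
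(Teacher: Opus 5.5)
Your proposal is correct and is essentially the paper's proof. Both normalize generators to $t^m+\epsilon\sum_{j<m}b_jt^j$ and $1+\epsilon\sum_{j=1}^{m_{aux}}b_{-j}t^{-j}$, then count the coefficients at exponents $\equiv -1 \bmod p$ that exactness kills; your reduction modulo units $1+\epsilon u$ makes explicit the paper's tacit reading that the exact generator must lift $t^m$ (resp.\ $1$) itself. That reading is a convention, not something your planned coefficient comparison can prove: with arbitrary units $v_0+\epsilon v_1$, the set of divisors admitting some exact generator is not even linear (for $p=m=3$, $(1+t)\bigl(t^3+\epsilon(t-t^2)\bigr)=t^3+t^4+\epsilon(t-t^3)$ is a derivative although $b_2=-1$). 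Also, $\#\{1\le i\le m_{aux} : i\equiv 1 \bmod p\}=\lceil m_{aux}/p\rceil$ for every $m_{aux}\ge 1$, so the hypothesis $p\nmid m_{aux}-1$ plays no role in your count.
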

\begin{proof}
    Deformations of the markings correspond to morphisms $k[\epsilon][[t]] \to k[\epsilon]$ with $\epsilon \mapsto c\epsilon$ for $c \in k$ which gives us the dimension of $D^m_{loc, good}(k[\epsilon])$. The vector space $D^m_{loc, ex}(k[\epsilon])$ parametrizes deformations of the marking together with deformations of the Cartier divisor $\langle t^m \rangle$ which are given by Cartier divisors
    \begin{align*}
        \mathcal{L}_{\epsilon} = \langle t^m + \epsilon\sum_{i=0}^{m-1} a_i t^i \rangle
    \end{align*}
    with $a_i=0$ whenever $i+1$ is divisible by $p$. Similarly, $D^{m_{aux}, q_{aux}}_{loc,ex}(k[\epsilon ])$ parametrizes deformations of the trivial Cartier divisor $\langle 1 \rangle$ that are given by Cartier divisors
    \begin{align*}
        \mathcal{L}_{\epsilon} = \langle 1 + \epsilon\sum_{i=1}^{m_{aux}} b_i t^{-i} \rangle
    \end{align*}
    with $b_i = 0$ whenever $p$ divides $i-1$. Note that here we do not have any section to deform.
\end{proof}

\begin{definition}
    Let $(C, q_1, \dots, q_n, s)$ be a $k$-point of $\GMgnexm$. Considering a $k[\epsilon]$-point of $\GMgnexm$ that reduces to $(C, q_1, \dots, q_n, s)$, localizing and completing around all $q_i$ for $i \in I^+$ and $q_{aux}$ gives a morphism
      \begin{align*}
        \Phi_{(C, q_1, \dots, q_n, s)}: T_{(C, q_1, \dots, q_n, s)}V_0 \to \bigoplus_{i \in I^+} D^{m_i}_{loc, ex}(k[\epsilon])  \oplus D^{m_{aux}, q_{aux}}_{loc, ex}(k[\epsilon])
    \end{align*}
\end{definition}
Note that in this definition we have implied that a differential form generates a Cartier divisor. To make sense of this, we identify $\hat{\Omega}^1_{k[\epsilon][[t]]/k[\epsilon]}$ with $k[\epsilon][[t]]$ by mapping $dt$ to $1$.

This allows us now to introduce a local-global principle which is the crucial tool to show smoothness of $\GMgnexm$:
\begin{theorem} \label{thm:ex_loc_glob_princ}
   $ \Phi_{(C, q_1, \dots, q_n, s)}$ is surjective. In other words, we can glue together any deformation of the local data to a global deformation.
\end{theorem}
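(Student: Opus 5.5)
Here $\Phi=\Phi_{(C, q_1, \dots, q_n, s)}$, and the same shorthand is used for the other maps below. The plan is to show surjectivity of $\Phi$ in two independent pieces, matching the decomposition of $T_{(C, q_1, \dots, q_n, s)}V_0$ from Lemma~\ref{lem:Vdim}. The first piece deforms only the exact form, keeping the curve fixed. The second piece deforms the curve together with the markings and accounts for the factors $D^{m_i}_{loc,good}$.

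First consider tangent vectors of the form $(0,\sigma)$ with $\sigma\in H^0(\ker(\pi_*\mathfrak{c})_{(C,q_1,\dots,q_n)})$, i.e.\ $s+\epsilon\sigma$ with $\sigma$ a global exact form in $H^0(\Omega_C(-\sum_{i\in I^-}m_iq_i+m_{aux}q_{aux}))$. Since markings are not moved, $\Phi$ sends this vector to the collection of truncated Laurent tails of $\sigma$: the coefficients $a_k$, $0\le k<m_i$, at each $q_i$ with $i\in I^+$, and the polar coefficients $b_j$ at $q_{aux}$. The exactness conditions $a_k=0$ for $k\equiv -1$ mod $p$ are automatic. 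So I must show that the evaluation map
\begin{align*}
\mathrm{ev}: H^0(\ker \mathfrak{c}) \to \bigoplus_{i\in I^+} \mathcal{N}_{q_i}/\mathcal{N}_{q_i}(-m_i q_i)\ \oplus\ \mathcal{N}_{q_{aux}}/\mathcal{N}_{q_{aux}}
\end{align*}
onto the spaces of admissible principal parts is surjective. I would realize this as the long exact sequence of $0\to\mathcal{N}(-E)\to\mathcal{N}\to\mathcal{N}/\mathcal{N}(-E)\to0$, with $E=\sum_{I^+}m_iq_i+m_{aux}q_{aux}$ and $\mathcal{N}(-E)$ the sheaf of exact forms in $\Omega_C(-\sum_{i=1}^n m_iq_i)$ (still Cartier-kernel, so $\mathcal{N}(-E)$ is the kernel of the corresponding Cartier operator). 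The cokernel of $\mathrm{ev}$ then injects into $H^1(\mathcal{N}(-E))$, or rather into the kernel of $H^1(\mathcal{N}(-E))\to H^1(\mathcal{N})$. Using the sequence $0\to\mathcal{O}(D'')\to F_*\mathcal{O}(\tilde D)\xrightarrow{d}\mathcal{N}\to0$ from the proof of Theorem~\ref{thm:car_sur}, $H^1(\mathcal{N}(-E))$ is a quotient of $H^1(\mathcal{O}_C(\tilde D_E))$. Here $\tilde D_E$ has degree roughly $2g-2-n$, so this is not zero. This is where the $q_{aux}$ term and the curve deformations must be used.

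Second, I would handle the tangent directions of $\mathcal{M}_{g,n}$ together with the good local deformations. A first order deformation of $(C,q_i)$ is given by a Čech class with values in $\Omega_C^\vee(-\sum q_i)$, and moving the marking $q_i$ by $c\epsilon$ with the form $(t-c)^m$ is realized by a vector field not vanishing at $q_i$. This yields the $D^{m_i}_{loc,good}$ summand of $D^{m_i}_{loc,ex}$, which has dimension $1$ by Lemma~\ref{lem:dim_loc}. The key is a compatibility: given local exact deformations near each $q_i$, subtract a good deformation to reduce to deformations with fixed marking. Then the obstruction class from the first step lies in $H^1$ and is matched by transporting $s$ along global curve deformations. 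Concretely, a Čech vector field $v_{ij}$ acts on $s$ by the Lie derivative $L_{v}s=d(\iota_v s)$, which is exact. Thus the coboundary map $H^1(\Omega^\vee_C(-\sum q_i))\to H^1(\mathcal{N}(-E))$, $v\mapsto [d(\iota_v s)]$, should hit the obstruction space.

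The main obstacle is proving this last surjectivity onto the cokernel of $\mathrm{ev}$, i.e.\ that $H^1$ of exact forms with the prescribed zeros is spanned by Lie derivatives of $s$ plus the freedom at $q_{aux}$. My first attempt would be to use that $s=df$ locally: contraction with $s$ identifies $\Omega^\vee_C(-\sum q_i)$ with $\mathcal{O}_C(\sum m_iq_i-\sum q_i)$, and $v\mapsto d(\iota_vs)$ becomes $d:\mathcal{O}(\tilde D)\to\mathcal{N}$, whose induced map on $H^1$ is surjective by the displayed sequence above. Checking that the divisors match exactly, using $p\nmid m_i+1$ as in Remark~\ref{rem: divisibility}, is the delicate bookkeeping. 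If they do not match, I would absorb the discrepancy using the large pole allowed at $q_{aux}$.
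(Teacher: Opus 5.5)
Your route works. The step you single out as the main obstacle goes through exactly as your first attempt proposes, with no fallback to $q_{aux}$ and no use of $p\nmid m_i+1$.

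Since $\deg\Omega_C=\sum m_i$, the divisor of $s$ is exactly $\sum m_iq_i$. So contraction is an isomorphism $\iota_s\colon\Omega_C^\vee(-\sum q_i)\xrightarrow{\sim}\mathcal{O}_C(-\sum_i(m_i+1)q_i)$. The sign in your $\mathcal{O}_C(\sum m_iq_i-\sum q_i)$ and in ``degree roughly $2g-2-n$'' is off: the degree is $-(2g-2)-n$, and $h^1=3g-3+n=\dim T\Mgn$, as it must be.

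The map $d$ sends $\mathcal{O}_C(-\sum_i(m_i+1)q_i)$ onto $\mathcal{N}(-E)$ stalkwise. Away from the markings this is exactness of the Cartier sequence. At a marked point, a primitive of an exact germ vanishing to order $m\ge 0$ (resp.\ with a pole of order at most $-m$) can be corrected by a local $p$-th power so that it vanishes to order $m+1$ (resp.\ has a pole of order at most $-m-1$). Since $H^2$ of the kernel vanishes on a curve, $\theta=H^1(d)\circ H^1(\iota_s)$ is onto all of $H^1(\mathcal{N}(-E))\supseteq\operatorname{im}\delta$.

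What you should still write out is the identity linking your two steps. Take a tangent vector $(v_{ij},\sigma_i)$ of $V_0$, written in charts where the markings are fixed, so $\sigma_j-\sigma_i=d(\iota_{v_{ij}}s)$. Then $\delta$ applied to its $\Phi$-image, i.e.\ to the principal parts of the $\sigma_i$, equals $[\sigma_j-\sigma_i]=\theta(v)$. Conversely, if $\theta(v)=\delta(x)$, lift $x$ to $\sigma^0_i\in\mathcal{N}(U_i)$ and correct by a $0$-cochain of $\mathcal{N}(-E)$; this gives a tangent vector whose $\Phi$-image is $x$.

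Compared with the paper, the mechanism is the same: $L_vs=d(\iota_vs)$, with $\iota_s$ turning vector fields into functions. The execution differs. The paper never passes to cohomology. It chooses a cover in which each marked point, including $q_{aux}$, lies in exactly one affine, so $s$ has neither zeros nor poles on the overlaps and $\iota_s$ is invertible there. It lifts the local data to exact forms $s+\epsilon\,dG_i$ on $U_i[\epsilon]$ and writes down the gluing cocycle $\eta_{ij}=\iota_s^{-1}(G_j-G_i)$ explicitly. The cocycle condition is automatic, and the gluing is checked by a power series computation. This is a cochain-level solution of your equation $\theta(v)=\delta(x)$, and it needs neither surjectivity of $d$ at the marked points nor any vanishing theorem.

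Your version is independent of the cover and identifies the obstruction space conceptually. Pushed one step further, it computes the tangent space of $\GMgnexm$ directly as $\mathbb{H}^1$ of $\Omega_C^\vee(-\sum q_i)\xrightarrow{d\circ\iota_s}\mathcal{N}(-E)$, i.e.\ as $H^1$ of the kernel $\mathcal{O}_{C^{(p/k)}}(-\sum_i\lceil (m_i+1)/p\rceil q_i')$. This sheaf has negative degree, since $\sum_i\lceil (m_i+1)/p\rceil\ge(2g-2+n)/p>0$. Hence its $H^1$ has dimension $g-1+\sum_i\lceil (m_i+1)/p\rceil=g-1+n+\sum_i\lfloor m_i/p\rfloor$, obtained without any auxiliary point.
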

\begin{proof}
     As any deformation of markings can clearly be glued together, it is enough to show the following: For each $n$-marked curve $C \in \Mgn(k)$ and collection of data
     \begin{align*}
         \{(q_i, \mathcal{L}_i) \in D^{m_i}_{loc}(k[\epsilon]) \}_{i \in I^+} \cup \{ \mathcal{L}_{aux} \in D^{m_{aux}, q_{aux}}_{loc}(k[\epsilon])\}
     \end{align*}
       there exists a deformation $\mathcal{C}/k[\epsilon]$ of $C$ and an exact differential form 
       \begin{align*}
           s \in \Omega_{\mathcal{C}/k[\epsilon]}(-\sum_{i \in I^-} m_i q_i+ m_{aux} q_{aux})
       \end{align*}such that the localization and completion of $s$ at each $q_i$ with $i \in I^+$ generates $\mathcal{L}_i$ and the localization and completion at $q_{aux}$ generates $\mathcal{L}_{aux}$. To simplify notation, in the following we will sometimes denote $q_{n+1}:= q_{aux}$ and $\mathcal{L}_{n+1}:=\mathcal{L}_{aux}$.
\paragraph*{\underline{Introducing Open Affines:}}
    Let $C =  \cup_{i=1}^{n+1} U_i$ be an affine cover of $C$, such that each $U_i$ contains no other marked points besides $q_i$. We can write $U_i= Spec(A_i)$ for some finitely generated $k$-algebra. We choose local coordinates $u_i$ at $q_i$ for each $U_i$. By $s_i :=s|_{U_i}$ we denote the restriction of $s$ to the open affines. Recall that from \citep[Theorem 1.2.4.]{sernesi2006deformations} we know that first order deformations of smooth affine schemes are trivial, so the only first order deformation of $U_i$ is 
    \begin{align*}
        \mathcal{U}_i:= Spec(A_i[\epsilon]).
    \end{align*}

    \paragraph*{\underline{Lifting Local Deformations to Open Affines:}}
   At first, we have to lift each local deformation in the codomain of $\Phi$ to $\mathcal{U}_i$. We start with the local deformations at those markings $q_i$ with $i \in I^+$ and will lift these deformations to $\mathcal{U}_i$ such that the lift has no poles on $\mathcal{U}_i$. To see that this is possible, denote by $r_i$ a point that is in $C$ but not in $U_i$. Note that by Riemann-Roch for $M$ large enough it holds that
   \begin{align*}
       h^0(\OC(-2q_i+Mr_i)) < h^0(\OC(-q_i+Mr_i)),
   \end{align*}
   so there is a regular function $h_i$ on $U_i$ with a simple zero at $q_i$. After rescaling by a unit, we can assume that under localization and completion $h_i$ maps to $u_i$. Again, with the notation from Lemma~\ref{lem:dim_loc} we see that 
   \begin{align*}
       s_i+\epsilon  \sum_{k=0}^{m-1} a_k h_i^k dh_i
   \end{align*}
   is a generator of the lift. To see that this form is exact, consider an étale neighborhood of $q_i$ that is given by $\mathbb{A}^1_{k[\epsilon]}$ where this section pulls back to 
   \begin{align*}
       (t^m+\epsilon \sum_{k=0}^{m-1} a_kt^k) dt
   \end{align*}
   On this étale neighborhood, it holds that 
   \begin{align*}
       \mathfrak{c} (s_i+\epsilon  \sum_{k=0}^{m-1} a_k h_i^k dh_i)=0
   \end{align*}
   and because this image is dense, exactness follows. Similarly, 
   \begin{align*}
       h^0(\OC(Mr_i)) < h^0(\OC(q_{aux}+Mr_i)),
   \end{align*}
   so there is a function $h_{aux}$ on $U_{aux}$ with only one simple pole at $q_{aux}$ and no other poles. With this, we can construct an exact lift of the deformation at the auxiliary Cartier divisor in the same way.
   \paragraph*{\underline{Gluing Local Deformations: Definition of Cocycle:}}
   In the following, we denote these generators of the corresponding Cartier divisors on the open affines that we have just constructed  by $\overline{s}_i=(s_i+\epsilon \tilde{s_i})$ with some exact differential form $\tilde{s}_i$. For indices $i \in I^-$ that correspond to poles, we take the trivial deformation coming from $\tilde{s}_i =0$. 
 We now have to find a first order deformation $\mathcal{C}$ of the curve $C$ such that the $\overline{s}_i$ glue together. Note that because $C$ is separated also $U_{ij}:= U_i \cap U_j$ is affine so we can write $U_{ij}=Spec(B_{ij})$ for some finitely generated $k$-algebra $B_{ij}$. The curve $C$ is smooth, so $\Omega_{{k(C)}/k}$ is a one-dimensional vector space over $k(C)$ that is generated by $du$ with $u$ being a meromorphic function with non-zero derivative. By definition, there are meromorphic functions $F_i, F_j, G_i, G_j$ on $C$ such that on $U_{ij}$ we can write
 \begin{align*}
     s_i|_{U_{ij}}&= dF_i =F_i' du\\
     \tilde{s_i}|_{U_{ij}}&= dG_i =G'_i du \\
     s_j|_{U_{ij}} &= dF_j =F_j' du \\
     \tilde{s_j}|_{U_{ij}}&= dG_j =G'_j du 
 \end{align*}
for some rational functions $F_i', G'_i, F_j'$ and $G'_j$ on $C$. Note that because $s_i$ and $s_j$ glue by assumption it holds $dF_i=dF_j$. Furthermore, we can choose $G_i, G_j$ to not have poles on $U_{ij}$.

First order deformations of $C$ correspond to a collection of isomorphisms $\varphi_{ij}: B_{ij}[\epsilon] \to B_{ij}[\epsilon]$ that satisfy the cocycle condition and reduce to the identity modulo $\epsilon$. This is a classical result about smooth varieties and essential for the study of $\Mgn$. This result can for example be found in \citep[Proposition 27.7.]{hartshorne2010deformation} and is the reason why first order deformations of $C$ correspond to elements in $H^1(\Omega_C^\vee)$. Such morphisms are given by 
    \begin{align*}
        \varphi_{ij}: B_{ij}[\epsilon] &\to B_{ij}[\epsilon] \\
        b_1+\epsilon  b_2 &\mapsto b_1 +\epsilon(b_2 +\eta_{ij}(b_1)) 
    \end{align*}
    with $\eta_{ij} \in Der_k(B_{ij}, B_{ij})=Hom_{B_{ij}}(\Omega_{B_{{ij}}/k}, B_{ij})$. 

    We set $\varphi_{ij}$ to be the collection of gluing morphisms that come from a collection of derivations  $\eta_{ij}$ that is the restriction of scalars to $B_{ij}$ of the $k(C)$-linear map defined by
    \begin{align*}
        du \mapsto\frac{G_j-G_i}{F'_i}
    \end{align*}
    Note that the $G_i$ and $G_j$ do not have poles and $F'_i$ has no zeroes on $U_{ij}$. Therefore the image is a regular function there, so indeed is in $B_{ij}$. We claim that these derivations give us the desired gluing data that we need to glue the $\overline{s_i}$ together. At first, note that this collection satisfies the cocycle condition because on $U_i \cap U_j \cap U_k$ we have $dF_i =dF_j=dF_k$, so it holds that
    \begin{align*}
        \eta_{ij}(u)+\eta_{jk}(u)= \frac{G_j-G_i}{F'_i} + \frac{G_k-G_j}{F'_j}= \frac{G_k-G_i}{F'_i} = \eta_{ik}(u)
    \end{align*}
    This means that the $\eta_{ij}$ we defined indeed give us a first order deformation $\mathcal{C}$ of the curve $C$.
   
    So once we have that $\varphi_{ij}^*(\overline{s}_i)-\overline{s}_j=0$, so $\overline{s}_i$ and $\overline{s}_j$ agree on the overlap $\mathcal{U}_{ij}:=\mathcal{U}_i \cap \mathcal{U}_j$, the desired local-global principle follows. 
       \paragraph*{\underline{Gluing Local Deformations: Calculation:}}
    Note that it is enough to show this equality on the completed local ring of a point in $\mathcal{U}_{ij}$ because localization and completion are injective. Let $q$ be such a point and $u_q$ a local parameter of $q$ when $q$ is considered a point of $C$, such that on the completed local ring of $q$ as a point of $C$, it holds $du=du_q$. To see existence of such a point, note that because $U_{ij}$ is dense in $C$, $du$ is not constantly $0$ on $U_{ij}$. So for such a point $q$ where $du$ is not $0$, shifting $u$ by the constant $u(q)$ gives the desired parameter. On this completed local ring we can write
     \begin{align*}
     s_i|_{U_{ij}}&= \hat{F}'_i(u_q)d u_q \\
     \tilde{s_i}|_{U_{ij}}&= \hat{G}'_i(u_q) d u_q \\
     s_j|_{U_{ij}} &= \hat{F}'_j(u_q) d u_q \\
     \tilde{s_j}|_{U_{ij}}&= \hat{G}'_j(u_q) d u_q
 \end{align*}
 
 for some functions $\hat{F}_i(u_q), \hat{F}_j(u_q), \hat{G}_i(u_q)$ and $\hat{G}_j(u_q)$, which are the germs of the $F_i, F_j, G_i$ and $G_j$ at $q$. Furthermore, it holds
 \begin{align*}
     \eta_{ij}(u_q)=\frac{\hat{G}_j(u_q)-\hat{G}_i(u_q)}{\hat{F}'_i(u_q)}
 \end{align*}
 To show that on this completed local ring the equality holds, we do the following calculation, similar as in \citep[Theorem 2.1.]{bainbridge2016strata}:
     \begin{align*}
        &\varphi_{ij}^*(\overline{s_i})-\overline{s_j}=\\
        &=\varphi_{ij}^*((\hat{F}'_i(u_q)+\epsilon \hat{G}'_i(u_q))d{u_q})-\overline{s_j} \\
        &=(\hat{F}'_i(u_q+\epsilon\eta_{ij}(u_q))+\epsilon \hat{G}'_i(u_q+\epsilon \eta_{ij}(u_q))) d(u_q+\epsilon \eta_{ij}(u_q))-\overline{s_j} \\
        &=(\hat{F}'_i(u_q)+\epsilon \hat{F}''_i(u_q)\eta_{ij}(u_q)+\epsilon \hat{G}'_i(u_q))(du_q+\epsilon\eta'_{ij}(u_q) du_q)-\overline{s_j} \\
        &=(\hat{F}'_i(u_q)+\epsilon(\hat{F}''_i(u_q)\eta_{ij}(u_q)+\hat{F}'_i(u_q)\eta'_{ij}(u_q)+\hat{G}'_i(u_q))(du_q)-\overline{s_j} \\
        &=\epsilon(\hat{F}''_i(u_q)\frac{\hat{G}_j(u_q)-\hat{G}_i(u_q)}{\hat{F}'_i(u_q)}+\frac{\hat{F}'_i(u_q)(\hat{G}'_j(u_q)-\hat{G}'_i(u_q))-\hat{F}''_i(u_q)(\hat{G}_j(u_q)-\hat{G}_i(u_q))}{\hat{F}'_i(u_q)}+ \\
        &+\hat{G}'_i(u_q)- \hat{G}'_j(u_q))(du_q) \\
        &=0
    \end{align*}
   To see that the constructed section is exact, it is enough to note that by construction its image under the twisted Cartier operator is $0$ on stalks.
\end{proof}

This local-global principle is the key ingredient for our main statement:

\begin{theorem}\label{thm:stack_smooth}
    If the Moduli space $\GMgnexm$ is nonempty, it is smooth of dimension
    \begin{align*}
        \dim(\GMgnexm)= g-1+n+\sum_{i=1}^n \left \lfloor \frac{m_i}{p} \right \rfloor
    \end{align*}
\end{theorem}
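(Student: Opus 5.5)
Since Theorem~\ref{thm: dim_bound} already gives $\dim \GMgnexm \geq g-1+n+\sum_{i=1}^n \lfloor m_i/p\rfloor$ at every point, it suffices to show that at every $k$-point $(C,q_1,\dots,q_n,s)$ the tangent space of $\GMgnexm$ has dimension at most, hence exactly, this number. A Deligne--Mumford stack of finite type over $k$ whose tangent space dimension at every closed point equals a lower bound for its local dimension is smooth of that dimension. So the argument reduces to computing $\dim_k T_{(C,q_1,\dots,q_n,s)}\GMgnexm$.

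To compute this tangent space I use that $\GMgnexm$ is cut out of $V_0$ by the closed conditions in the proof of Theorem~\ref{thm: dim_bound}. A tangent vector of $V_0$ lies in $T\GMgnexm$ exactly when its image under $\Phi_{(C,q_1,\dots,q_n,s)}$ is a good deformation at each $q_i$ with $i\in I^+$, and the trivial (pole-free) deformation at $q_{aux}$. In other words,
\begin{align*}
T_{(C,q_1,\dots,q_n,s)}\GMgnexm = \Phi^{-1}\Big(\bigoplus_{i\in I^+} D^{m_i}_{loc,good}(k[\epsilon]) \oplus \{0\}\Big).
\end{align*}
I must check that the zero-order condition at $q_{aux}$ corresponds to the zero element of $D^{m_{aux},q_{aux}}_{loc,ex}(k[\epsilon])$. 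I also need that ``vanishing to order $m_i$ along a deformed section'' is exactly the good locus. For first order this should follow from $(t-c\epsilon)^m = t^m - mc\epsilon t^{m-1}$ together with the chosen coordinates.

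By Theorem~\ref{thm:ex_loc_glob_princ}, $\Phi$ is surjective. Hence the preimage of a subspace $W$ has dimension $\dim T V_0 - \dim(\text{target}) + \dim W$. Using Lemma~\ref{lem:Vdim} and Lemma~\ref{lem:dim_loc}, this equals
\begin{align*}
&3g-3+n-\sum_{i\in I^-}\Big(m_i-\Big\lfloor\frac{m_i}{p}\Big\rfloor\Big)+m_{aux}-\Big\lceil\frac{m_{aux}}{p}\Big\rceil \\
&\quad-\sum_{i\in I^+}\Big(m_i-\Big\lfloor\frac{m_i}{p}\Big\rfloor+1\Big)-\Big(m_{aux}-\Big\lceil\frac{m_{aux}}{p}\Big\rceil\Big)+\#I^+ ,
\end{align*}
which simplifies, using $\sum m_i=2g-2$, to $g-1+n+\sum_i\lfloor m_i/p\rfloor$. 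This is the claimed dimension, so the tangent space is as small as possible and smoothness follows.

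The main obstacle is the identification of $T\GMgnexm$ with this preimage. Two points need care. First, $\Phi$ is defined using localization at the marked points, and the deformation of a marking $q_i$ in $V_0$ must match the section part of the local datum. Second, the target of $\Phi$ has to be the product of the local functors evaluated at $k[\epsilon]$, with each good subspace being one-dimensional and linear. I would handle this by writing a tangent vector in formal coordinates, $s=(t^{m_i}+\epsilon\sum_{k<m_i} a_k t^k)\,dt$ with marking $t=c\epsilon$. The condition that the deformed form vanish to order $m_i$ along the deformed marking then forces $a_{m_i-1}=-m_i c$ and all other $a_k=0$. This is precisely the good locus, and since $p\nmid m_i+1$ it lies in the exact locus. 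Finally, the statement for $\Mgnexm$ follows because it is the quotient of $\GMgnexm$ by the free $\mathbb{G}_m$-action, which lowers the dimension by one.
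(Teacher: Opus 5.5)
Your proposal is correct and is essentially the paper's own proof. The paper also writes $T\GMgnexm\subseteq TV_0$ as the kernel of $pr\circ\Phi$, with $pr$ killing the good deformations at the $q_i$, $i\in I^+$; this is exactly your $\Phi^{-1}\bigl(\bigoplus_{i\in I^+}D^{m_i}_{loc,good}(k[\epsilon])\oplus\{0\}\bigr)$. It then gets the dimension $g-1+n+\sum_i\lfloor m_i/p\rfloor$ from Theorem~\ref{thm:ex_loc_glob_princ} and Lemmas~\ref{lem:dim_loc} and~\ref{lem:Vdim}, and concludes smoothness by comparing with the lower bound of Theorem~\ref{thm: dim_bound}.
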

\begin{proof}
   Let $(C, q_1, \dots, q_n,s)$ be a $k$-point of $\GMgnexm$. A tangent vector $\nu \in T_{(C, q_1, \dots, q_n,s)} V_0$ now is also a tangent vector to $(C, q_1, \dots, q_n,s)$ in $\GMgnexm$ if it does not split the multiplicity of zeroes and has no poles along the auxiliary marking. This is equivalent to $\nu$ being in the kernel of
    \begin{align*}
       pr \circ \Phi: T_xV_0 \to \bigoplus_{i \in I^+} (D^{m_i}_{loc, ex}(k[\epsilon]) / D^{m_i}_{loc, good} (k[\epsilon]) )\oplus  D^{m_{aux}, q_{aux}}_{loc}(k[\epsilon ])
    \end{align*}
   where $pr$ is the obvious projection map. In the last theorem we saw that this morphism is surjective. From Lemma~\ref{lem:dim_loc} and Lemma~\ref{lem:Vdim} we know the dimension of all appearing vector spaces. Using that the $m_i$ sum up to $2g-2$, we can compute the dimension of the tangent space of $\GMgnexm$ at $(C, q_1, \dots, q_n, s)$ to be 
    \begin{align*}
    \dim_k T_{(C, q_1, \dots, q_n, s)} \GMgnexm=g-1+n + \sum_{i=1}^n \left \lfloor \frac{m_i}{p}\right \rfloor
    \end{align*}

    From Theorem~\ref{thm: dim_bound} we know that the dimension of the tangent space is also at most the dimension of $\GMgnexm$, so in fact they are equal and it follows the desired smoothness result.
    \end{proof}
We can further conclude the analogous statement for $\Mgnexm$.

\begin{corollary} \label{cor:Mgnexm_smth}
        If the Moduli space $\Mgnexm$ is nonempty, it is smooth of dimension
    \begin{align*}
        \dim(\Mgnexm)= g-2+n+\sum_{i=1}^n \left \lfloor \frac{m_i}{p} \right \rfloor
    \end{align*}
\end{corollary}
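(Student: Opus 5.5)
The plan is to deduce the statement from Theorem~\ref{thm:stack_smooth} through the forgetful morphism
\begin{align*}
    \rho: \GMgnexm \to \Mgnexm, \qquad (C, q_1, \dots, q_n, s) \mapsto (C, q_1, \dots, q_n),
\end{align*}
which was already used in Corollary~\ref{cor: Mgnexm_rep}. We first note that $\Mgnexm$ is nonempty if and only if $\GMgnexm$ is. One direction is immediate from $\rho$. Conversely, a $k$-point of $\Mgnexm$ comes with a non-zero exact section of $\Omega_C(-\sum m_i q_i)$ by definition, and this is a $k$-point of $\GMgnexm$. So under the hypothesis we may apply Theorem~\ref{thm:stack_smooth} to $\GMgnexm$.

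The main step is to show that $\rho$ is a $\mathbb{G}_m$-torsor, so in particular smooth and surjective of relative dimension $1$. Given an $S$-point $(C/S, q_1, \dots, q_n)$ of $\Mgnexm$, the definition says that $\Omega_{C/S}(-\sum m_i q_i)$ has a fiberwise non-zero section, unique up to rescaling. I would make this precise as follows. Since $\deg \Omega_C(-\sum m_i q_i) = 0$, on each fiber $H^0$ is one-dimensional exactly when the bundle is trivial. Hence by cohomology and base change, $\mathcal{E} := \pi_*\Omega_{C/S}(-\sum m_i q_i)$ is a line bundle on $S$ whose formation commutes with base change. The fiber of $\rho$ over this point is then the set of nowhere-vanishing sections of $\mathcal{E}$, that is, the $\mathbb{G}_m$-torsor $\mathcal{E}\setminus 0$. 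Here exactness is preserved under multiplication by units of $\mathcal{O}_S$, because the Cartier operator is $p$-linear: $\mathfrak{c}(\lambda s) = \lambda^{(p)}\mathfrak{c}(s)$, so $\ker\mathfrak{c}$ is stable under scaling. I expect this identification, and in particular checking that the exactness condition is stable under rescaling and compatible with base change, to be the only point needing care. The rest is formal.

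Since $\rho$ is smooth and surjective of relative dimension $1$, smoothness descends. Concretely, smoothness of a stack can be checked smooth-locally, and $\GMgnexm$ is smooth-locally isomorphic to $\Mgnexm \times \mathbb{G}_m$. Therefore $\Mgnexm$ is smooth, and
\begin{align*}
    \dim \Mgnexm = \dim \GMgnexm - 1 = g-2+n+\sum_{i=1}^n \left\lfloor \frac{m_i}{p} \right\rfloor.
\end{align*}
Alternatively, at the level of tangent spaces, the exact sequence
\begin{align*}
    0 \to k \cdot s \to T_{(C,q_1,\dots,q_n,s)}\GMgnexm \to T_{(C,q_1,\dots,q_n)}\Mgnexm \to 0
\end{align*}
coming from the torsor structure, with the first term the vertical tangent direction, gives the same count at every $k$-point.
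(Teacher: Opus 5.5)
Your proposal is correct and follows the paper's route. Like the paper, you treat the forgetful map $\GMgnexm \to \Mgnexm$ as a $\mathbb{G}_m$-torsor, descend smoothness along it, and subtract one from the dimension given by Theorem~\ref{thm:stack_smooth}; you additionally fill in what the paper takes as clear, namely the nonemptiness equivalence and the identification of the fibres of $\rho$ with the nowhere-vanishing sections of $\mathcal{E}$ (which, over a non-reduced base, is best justified by the given fiberwise non-zero section trivializing the bundle rather than by constancy of $h^0$ alone).
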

\begin{proof}
    From our discussion in Corollary~\ref{cor: Mgnexm_rep}, we know that $\GMgnexm$ is a $\mathbb{G}_m$-torsor over $\Mgnexm$. This means each point of $\GMgnexm$ has an étale neighborhood $U$ that is isomorphic to an open subscheme of $\Mgnexm \times \mathbb{A}^1$. By faithfully flat descent of smoothness, it follows that $\Mgnexm$ is smooth.

    To see the dimension result, it is enough to see that taking a $\mathbb{G}_m$-quotient reduces the dimension by one.
\end{proof}

\section{Example: Differential Forms on Elliptic Curve Without Zeroes and Poles} \label{sec:example}
As for a smooth elliptic curve $C$ the sheaf of differential forms $\Omega_C$ is isomorphic to the structure sheaf $\mathcal{O}_C$, there is a, up to rescaling, unique differential form on $C$ without zeroes or poles. The question of when this form is exact, was the starting point of the research that led to this paper. As this consideration provides good intuition about the Cartier operator and its answer shows a beautiful connection to questions coming from the theory of Abelian varieties, we will discuss this example here explicitly.
\begin{example}
In characteristic $p$, the, up to rescaling, unique differential form without zeroes and poles on an elliptic curve is exact, iff the curve is supersingular. 
\end{example}
\begin{remark}
    From this, it follows immediately that for $g=n=1$ and $\mathbf{m}=(0)$ the Moduli space $\Mgnexm$ is a disjoint union of points and $\GMgnexm$ is a disjoint union of copies of $\Gm$. In particular, one can see explicitly that these spaces are smooth of the correct dimension as described in Theorem~\ref{thm:stack_smooth} and Corollary~\ref{cor:Mgnexm_smth}.
\end{remark}

We will start by showing this result explicitly for $p>3$ and explain later how a more conceptual understanding can be used to see that this also holds for $p=2$ and $p=3$.

For $p>3$ a smooth elliptic curve has an affine chart $U$ given by a Weierstrass equation
\begin{align*}
    y^2=x(x-1)(x-\lambda)=x^3-(\lambda+1)x^2+ \lambda x
\end{align*}

It is a classical result that on this chart $U$ the module of regular differential forms is generated by the form $\frac{dx}{y}$ that has neither zeroes nor poles. As $U$ is dense in $C$, a differential form without zeroes and poles on $C$ is exact iff $\frac{dx}{y}$ is on $U$.

Restricted to $U$, for 
\begin{align*}
    A=\frac{\Fp[x,y]}{(y^2-x(x-1)(x-\lambda))}
\end{align*}
and
\begin{align*}
    A^{(p/\Fp)}=\frac{\Fp[x',y']}{(y'^{2}-x'(x'-1)(x'-\lambda^p))}
\end{align*}
the Cartier operator is a $A^{(p/\mathbb{F}_p)}$- linear map
\begin{align*}
    (F_{A/\Fp})_* \frac{A[dx,dy]}{(2dy-(3x^2-2(\lambda+1)x+\lambda)dx)}&= (F_{A/\Fp})_* \Omega_{A/\Fp} \to \\
    \Omega_{A^{(p/\Fp)}/\Fp} &= \frac{A^{(p/\Fp)}[dx',dy']}{(2dy'-(3x'^2-2(\lambda^p+1)x'+\lambda^p)dx')}
\end{align*}
To check when the form of interest $\frac{dx}{y}$ is exact, we have to check when the Cartier operator maps it to $0$. For this, we see that
\begin{align*}
    \mathfrak{c}\left(\frac{dx}{y}\right)=\frac{1}{y'}\mathfrak{c} \left( y^{p-1} dx \right)=\frac{1}{y'}\mathfrak{c} \left((x(x-1)(x-\lambda))^{\frac{p-1}{2}} dx \right)
\end{align*}
To understand this better, we write the polynomial in question as
\begin{align*}
    \left(x(x-1)(x-\lambda)\right)^{\frac{p-1}{2}}=\sum_{i=0}^\frac{3p-3}{2} a_i x^i
\end{align*}
From this, we see that
\begin{align*}
    \mathfrak{c}\left(\frac{dx}{y}\right)=\frac{a_{p-1}}{y'} dx'
\end{align*}
so $\frac{dx}{y}$ is exact iff $a_{p-1}=0$. But according to \citep[V.4 Theorem 4.1.]{silverman2009arithmetic} this is equivalent to the curve being supersingular. 

The more conceptual way to think about this phenomena that also works for $p=2$ and $p=3$ is the following: Clearly $H^1(C, \OC)$ is a one dimensional vector space over $\Fp$. The absolute Frobenius induces an endomorphism of $H^1(C, \OC)$ that because of dimension reasons is just multiplication by a scalar. In \citep[Section 2.0]{katz1973p}, Katz described the Hasse invariant, as precisely being that scalar. As an elliptic curve is supersingular iff its Hasse invariant is $0$, we can conclude the following lemma:
\begin{lemma}
    An elliptic curve $C$ over $\Fp$ is supersingular iff the induced endomorphism of the absolute Frobenius map on $H^1(C, \OC)$ has a nontrivial cokernel.
\end{lemma}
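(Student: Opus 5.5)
The plan is to reduce the statement to the identification of the Hasse invariant recalled just above from \citep[Section 2.0]{katz1973p}, together with the fact that an elliptic curve is supersingular iff its Hasse invariant vanishes. The only real work is to make precise what ``multiplication by a scalar'' means for a $p$-linear map, and to check that the cokernel detects exactly whether this scalar vanishes. Throughout I work over a perfect base field $k \supseteq \Fp$, for instance $\Fp$ itself or its algebraic closure as elsewhere in the paper. Perfectness is the one hypothesis that is genuinely used.

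First I will fix a basis vector $v$ of the one-dimensional $k$-vector space $H^1(C,\OC)$. The absolute Frobenius $F_C$ induces an additive map
\begin{align*}
F^*: H^1(C,\OC) \to H^1(C,\OC)
\end{align*}
which is $p$-linear, meaning $F^*(\lambda v) = \lambda^p F^*(v)$ for $\lambda \in k$. Hence $F^*(v) = A\, v$ for a unique $A \in k$. Following Katz, this $A$ is the Hasse invariant of $C$ with respect to the chosen basis. Changing the basis to $\mu v$ rescales $A$ by $\mu^{p-1}$, so whether $A=0$ does not depend on the choice of $v$.

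Next I will compute the image of $F^*$. Every element of $H^1(C,\OC)$ has the form $\lambda v$, so
\begin{align*}
\operatorname{im}(F^*) = \{\lambda^p A\, v : \lambda \in k\} = k^p A\, v = k A\, v,
\end{align*}
where the last equality uses that $k$ is perfect. Consequently the image is all of $H^1(C,\OC)$ when $A \neq 0$, so the cokernel vanishes. When $A = 0$ the image is zero, so the cokernel is the whole one-dimensional space and in particular nontrivial. Therefore the cokernel is nontrivial iff $A = 0$, and by Katz (equivalently \citep[V.4 Theorem 4.1]{silverman2009arithmetic}) $A=0$ iff $C$ is supersingular.

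I expect the main obstacle to lie in the citation rather than in the computation: one has to be sure that Katz's Hasse invariant really is the scalar of $F^*$ on $H^1(C,\OC)$, normalized compatibly with the Silverman criterion used earlier in the example. As a cross-check, and to tie the lemma back to the Cartier operator, I would also use the alternative route through Serre duality. Frobenius on $H^1(C,\OC)$ is dual to the Cartier operator on $H^0(C,\Omega_C)$. The one-dimensional image computation above then transfers directly, so the cokernel is nontrivial iff $\mathfrak{c}$ kills the invariant differential, i.e.\ iff that form is exact. This recovers, for all $p$ including $p=2,3$, the equivalence obtained explicitly above for $p>3$.
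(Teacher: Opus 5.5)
Your proposal is correct and follows the paper's own argument. Both use the one-dimensionality of $H^1(C,\OC)$ to make Frobenius act by a scalar, invoke Katz to identify that scalar with the Hasse invariant, and conclude from the fact that the Hasse invariant vanishes exactly for supersingular curves. Your extra care about $p$-linearity and the perfectness of the base field (so that the image $k^pA\,v$ is all of $H^1$ when $A\neq 0$) spells out what the paper leaves implicit by working over $\Fp$, where Frobenius acts linearly.
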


Note that as we work over a finite, so in particular perfect, field, we do not need to differentiate between the absolute and the relative Frobenius morphism. From \citep[Proposition 9]{serre1958topologie} we know that under Serre duality, the Cartier operator corresponds to the Frobenius morphism. By using this duality theorem, we therefore can conclude that the curve is supersingular, iff the Cartier operator maps all differential forms from $H^0(\Omega_{C/\Fp})$ to $0$. As $H^0(\Omega_{C/\Fp})$ is one-dimensional, this is equivalent to the, up to rescaling unique, differential form without zeroes and poles being exact.

\bibliographystyle{alpha}
\bibliography{bib}

\end{document}